\documentclass[10pt]{amsart}
\usepackage{latexsym}
\usepackage{stmaryrd}
\usepackage{amsmath}
\usepackage{epsfig}
\usepackage{graphicx}
\usepackage{eufrak}
\usepackage{dsfont}
\usepackage[english]{babel}
\usepackage{color}
\usepackage{mathabx}

\usepackage{amscd}      
\usepackage{amssymb}
\usepackage{dsfont}
\usepackage{xypic}      
\LaTeXdiagrams          
\usepackage[all,v2]{xy}
\xyoption{2cell} \UseAllTwocells \xyoption{frame} \CompileMatrices
\usepackage{latexsym}
\usepackage{epsfig}
\usepackage{enumerate}

\usepackage{latexsym}
\usepackage{epsfig}
\usepackage{amsfonts}
\usepackage{enumerate}
\usepackage{mathrsfs}

\allowdisplaybreaks[3]

\newtheorem{prop}{Proposition}[section]
\newtheorem{lem}[prop]{Lemma}

\newtheorem{cor}[prop]{Corollary}
\newtheorem{thm}[prop]{Theorem}
\newtheorem{rmk}[prop]{Remark}
\newtheorem{example}{Example}


\newtheorem{defn}[prop]{Definition}

            {\nolinebreak $\Box$ \end{trivlist}}

\newcommand{\noprint}[1]{}

\newcommand{\Hom}{\mbox{Hom}}

\newcommand{\et}{{\mbox{\tiny \'{e}t}}}

\newcommand{\tw}{\mbox{\tiny tw}}

\newcommand{\pt}{\mathop{pt}}

\newcommand{\E}{\mathop{\sf E}\nolimits}

\newcommand{\N}{\mathcal{N}}

\newcommand{\XX}{{\mathfrak X}}

\renewcommand{\SS}{{\mathfrak S}}

\newcommand{\YY}{{\mathfrak Y}}

\newcommand{\ZZ}{{\mathfrak Z}}

\newcommand{\Tt}{{\mathfrak t}}

\newcommand{\zz}{{\mathbb Z}}

\renewcommand{\ll}{{\mathbb L}}
\newcommand{\qq}{{\mathbb Q}}

\newcommand{\Gm}{{{\mathbb G}_{\mbox{\tiny\rm m}}}}

\newcommand{\sB}{{\mathcal B}}

\newcommand{\sE}{{\mathcal E}}

\newcommand{\sL}{{\mathcal L}}

\newcommand{\sO}{{\mathcal O}}

\newcommand{\sM}{{\mathcal M}}

\newcommand{\Coh}{\mbox{Coh}}

\newcommand{\rE}{\mathscr{E}}

\newcommand{\rT}{\mathscr{T}}
\newcommand{\rH}{\mathscr{H}}
\newcommand{\rP}{\mathscr{P}}
\newcommand{\rHT}{\mathscr{HT}}
\newcommand{\rHP}{\mathscr{HP}}
\newcommand{\rA}{\mathscr{A}}
\newcommand{\rHA}{\mathscr{HA}}

\newcommand{\HGA}{\mathbf{HGA}}
\newcommand{\GA}{\mathbf{GA}}
\newcommand{\HA}{\mathbf{HA}}

\newcommand{\SU}{\text{SU}}
\newcommand{\SL}{\text{SL}}
\newcommand{\PGL}{\text{PGL}}

\newcommand{\cHom}{\mathscr{H}om}

\DeclareMathOperator{\id}{id}

\DeclareMathOperator{\Aut}{Aut}

\DeclareMathOperator{\ind}{ind}

\DeclareMathOperator{\VW}{VW}

\DeclareMathOperator{\vir}{vir}

\DeclareMathOperator{\Pic}{Pic}
\DeclareMathOperator{\vd}{vd}

\DeclareMathOperator{\Higg}{Higg}

\DeclareMathOperator{\vb}{vb}

\DeclareMathOperator{\Tot}{Tot}

\DeclareMathOperator{\per}{per}

\DeclareMathOperator{\pparf}{parf}
\DeclareMathOperator{\triv}{triv}

\newcommand{\rk}{\mathop{\rm rk}}

\newcommand{\tr}{\mathop{\rm tr}\nolimits}

\newcommand{\spec}{\mathop{\rm Spec}\nolimits}

\newcommand{\tor}{\mathop{\rm tor}\nolimits}

\numberwithin{equation}{subsection}

\hyphenpenalty=6000 \tolerance=10000

%
\newcommand {\mat}      [1] {\left(\begin{array}{#1}}
\newcommand {\rix}          {\end{array}\right)}



\title[Construction of  moduli stack of projective Higgs bundles]{On the construction of moduli stack of projective Higgs bundles over surfaces}
\author{Yunfeng Jiang}
\address{Department of Mathematics\\ University of Kansas\\ 405 Snow Hall 1460 Jayhawk Blvd\\Lawrence KS 66045 USA} 
\email{y.jiang@ku.edu}

\begin{document}
\sloppy \maketitle
\begin{abstract}
We generalize the construction of M. Lieblich for the compactification of the moduli stack of $\PGL_r$-bundles on algebraic spaces to  the  moduli stack of Tanaka-Thomas  $\PGL_r$-Higgs bundles on algebraic schemes.  The method we use is  the moduli stack of Higgs version of Azumaya algebras. In the case of smooth surfaces, we obtain a virtual fundamental class on the moduli stack  of  $\PGL_r$-Higgs bundles. An application to the Vafa-Witten invariants is discussed. 
\end{abstract}

\maketitle

\tableofcontents

\section{Introduction}

It is well-known that the Langlands dual group of the group 
$\SL_r$ is  $\PGL_r$.  The moduli space $M_{r, L}(X)$ of slope semistable vector bundles on a smooth algebraic curve 
$X$ with rank $r$ and fixed determinant $L\in\Pic(X)$ is a smooth projective scheme.  The moduli space $N_{r, L}(X)$ of semistable Higgs bundles 
$(E,\phi)$ on $X$ with $\det(E)=L$ is a smooth hyperk\"ahler manifold.  The structure group for such moduli spaces is 
$\SL_r$. Motivated by mirror symmetry and Langlands duality, it is interesting to study the moduli space of $\PGL_r$-bundles or $\PGL_r$-Higgs bundles.  The research along this direction has been studied for a long time.  Especially in \cite{HT}, Hausel-Thaddeus have made a conjecture that the moduli space of Higgs bundles for the group $\SL_r$ and the moduli space of  $\PGL_r$-Higgs bundles on $X$ form a SYZ mirror partner.  Hausel-Thaddeus have proved several cases, and recently this conjecture was proved in \cite{GWZ}.
In \cite{Yun}, Yun has studied a more refined correspondence between the moduli stack of  parabolic $\SL_r$-Higgs bundles on $X$ and the moduli stack of parabolic $\PGL_r$-Higgs bundles on $X$ via Hecke correspondence. 

Let $X$ be a smooth projective surface.  The moduli space $\N:=\N_{r,L, c_2}(X)$ of stable Higgs sheaves $(E,\phi)$ was studied in \cite{TT1},  where a Higgs sheaf $(E,\phi)$ means that $E$ is a torsion free coherent sheaf  of rank $r$, $\det(E)=L$, and second Chern class $c_2(E)=c_2$; and $\phi: E\to E\otimes K_X$ an $\sO_X$-linear map called the Higgs field.  This is not the case of Simpson Higgs bundles or sheaves, where Simpson takes the Higgs field as 
$\theta: E\to E\otimes \Omega_X$.  The moduli space $\N$ is isomorphic to the moduli space of stable torsion two dimensional 
sheaves on the total space $Y:=\Tot(K_X)$ supported on $X$.  
The structure group for the moduli space $\N$ is $\SL_r$. 
In the Langlands dual side,  the moduli stack $\GA_{\XX}^{s}(r)$ of stable $\PGL_r$-bundles and its compactification was constructed in \cite{Lieblich_ANT} using the generalized Azumaya algebras. 

The S-duality conjecture of Vafa-Witten \cite{VW} predicted that the Langlands dual gauge group $\SU(r)/\zz_r$-Vafa-Witten theory can be obtained from the S-transformation of the gauge group $\SU(r)$-Vafa-Witten theory. More details can be found in 
\cite{VW}, \cite{Jiang_ICCM} and \cite{Jiang_2019}. The Vafa-Witten invariants for the surface $S$ and gauge group $\SU(r)$ are defined by Tanaka-Thomas \cite{TT1} using the moduli space $\N$ of  stable Higgs sheaves $(E,\phi)$.  
The method to define the invariants was studied in \cite{JT}, \cite{Jiang}. 
Tanaka-Thomas proved that there is a symmetric obstruction theory on $\N$ in the sense of Behrend \cite{Behrend}, and defined the Vafa-Witten invariants using virtual localization techniques in \cite{GP} on the moduli spaces.  They proved several cases of the Vafa-Witten predictions for the group $\SU(r)$.   To study the full S-duality conjecture in \cite{VW}, in \cite{Jiang_2019} the author has developed a theory of twisted Vafa-Witten invariants using the moduli space $\N^{\tw}:=\N^{\tw,s}_{\XX}(r,L,c_2)$ of stable $\XX$-twisted Higgs sheaves $(E,\phi)$ on $\XX$ with $\det(E)=L\in\Pic(\XX)$ and $c_2(E)=c_2$, where $\XX\to X$ is a $\mu_r$-gerbe over the surface $X$.  
The twisted invariants are also defined by virtual localization on the symmetric obstruction theory on $\N^{\tw}$.
In \cite{Jiang_2019}, the author conjectured that the twisted Vafa-Witten invariants of all the $\mu_r$-gerbes on $X$ will give the gauge group $\SU(r)/\zz_r$-Vafa-Witten invariants and satisfy the S-duality conjecture. 

In \cite{Lieblich_ANT}, for a $\mu_r$-gerbe $\XX\to X$ such that its order 
$|[\XX]|$ in the cohomological Brauer group $H^2(X,\Gm)_{\tor}$ is $r$ (which is called an optimal gerbe), 
Lieblich showed that the moduli stack $\sM^{\tw}$ of stable $\XX$-twisted sheaves on $\XX$ is a cover over the moduli stack 
 $\GA_{\XX}^{s}(r)$ of stable $\PGL_r$-torsors. 
We generalize the construction to the moduli stack $\HGA_{\XX}^{s}(r)$ of stable Higgs-Azumaya algebras of rank $r^2$. 
We show that there exists a covering morphism from the moduli stack $\N^{\tw}$ of 
stable $\XX$-twisted  Higgs sheaves to the  moduli stack $\HGA_{\XX}^{s}(r)$ of stable Higgs-Azumaya algebras of rank $r^2$, see Proposition \ref{prop_covering_morphism}. 
We then show that this morphism is compatible with the perfect obstruction theories on $\N^{\tw}$ and  $\HGA_{\XX}^{s}(r)$
respectively.  Thus the twisted  Vafa-Witten invariants $\VW^{\tw}$ defined by $\N^{\tw}$ can be obtained from the $\PGL_r$-invariants $\VW^{\PGL}$ defined by $\HGA_{\XX}^{s}(r)$, see Theorem \ref{thm_pullback_virtual_class}.
This provides  an evidence that the twisted Vafa-Witten invariants studied in \cite{Jiang_2019} are really the Langlands dual side 
$\SU(r)/\zz_r$-Vafa-Witten invariants due to the fact that the Langlands dual group of the structure group $\SL_r$ is $\PGL_r$.

\subsection{Outline}

This short note is organized as follows. In \S \ref{sec_preliminaries} we collect some results of twisted objects and rigidifications for a stack;  In \S \ref{sec_moduli_projective_Higgs_bundles} we provide the construction of the moduli stack of 
$\PGL_r$-Higgs bundles using the generalized Higgs-Azumaya algebras.  Finally in \S \ref{sec_POT_virtual_class_VW_Invariants} we talk about stability,  the construction of virtual fundamental classes and application to the Vafa-Witten invariants. 

\subsection{Convention}
We work over an algebraically closed field $\kappa$ of any character throughout of the paper. We denote by $\Gm$ the multiplicative group over $\kappa$.   We use Roman letter $E$ to represent a coherent sheaf on a projective DM stack or an \'etale gerbe  $\XX$, and use curl latter $\sE$ to represent the sheaves on the total space Tot$(\sL)$ of a line bundle $\sL$ over $\XX$. 
We keep the convention in \cite{VW} to use $\SU(r)/\zz_r$ as the Langlands dual group of $\SU(r)$. We always write 
$\SL_r:=\SL_r(\kappa)$ and $\PGL_r:=\PGL_r(\kappa)$.

\subsection*{Acknowledgments}

Y. J. would like to thank  Huai-Liang Chang, Amin Gholampour, Martijn Kool,  Richard Thomas and Hsian-Hua Tseng for valuable discussions.  This work is partially supported by  NSF DMS-1600997.


\section{Preliminaries on twisted objects and rigidifications}\label{sec_preliminaries}

\subsection{Notations}\label{subsec_notations}

We mainly follow the notation and construction of Lieblich in his beautiful paper \cite[\S3]{Lieblich_ANT}.  Let 
$X$ be a smooth projective scheme. (Note that Lieblich constructed the stack for any algebraic space $X$, for our purpose we fix to the case of schemes.) For the definition of stacks of sheaves, we use \cite{Stack_Project}, \cite{LMB}.

Let $E$ be a quasi-coherent sheaf of finite presentation on $X$. The definition of perfectness, pureness, total supportedness and total pureness of $E$ are given in \cite[\S3]{Lieblich_ANT}.  We use the following notations:
\begin{enumerate}
\item $\rT(X)$: the stack of totally supported sheaves;
\item  $\rT(X)^{\pparf}$: the stack of perfect totally supported sheaves;
\item $\rP(X)$: the stack of pure sheaves;
\item $\rP(X)^{\pparf}$: the stack of perfect pure sheaves.
\end{enumerate}

We need to define the corresponding stack of Higgs sheaves.

\begin{defn}\label{defn_Higgs_sheaf}
A Tanaka-Thomas Higgs sheaf on $X$ is a pair $(E,\phi)$, where $E\in \Coh(X)$ is a coherent sheaf, and 
$$\phi: E\to E\otimes K_{X}$$
is a $\sO_X$-linear map which is called the Higgs field. 
\end{defn}
\begin{rmk}
Let $Y:=\Tot(K_X)$ be the total space of the canonical line bundle $K_X$. 
According to \cite[Proposition 2.6]{TT1}, the category of Higgs sheaves on $X$ is equivalent to the category of 
$\dim(X)$-dimensional torsion sheaves on $Y$ supported on the zero section $X\subset \Tot(K_X)$. 
\end{rmk}
We define the corresponding stacks of Higgs sheaves. 
\begin{enumerate}
\item $\rHT(X)$: the stack of totally supported Higgs sheaves;
\item  $\rHT(X)^{\pparf}$: the stack of perfect totally supported Higgs sheaves;
\item $\rHP(X)$: the stack of pure Higgs sheaves;
\item $\rHP(X)^{\pparf}$: the stack of perfect pure Higgs sheaves.
\end{enumerate}
\begin{rmk}
A Higgs sheaf $(E,\phi)$ on $X$ is called ``totally supported", ``perfect totally supported", ``pure" and 
``perfect pure" if the corresponding sheaf $E$ is  ``totally supported", ``perfect totally supported", ``pure" and 
``perfect pure" respectively. 
\end{rmk}

\subsection{Construction of Lieblich}\label{subsec_Lieblich}

In \cite[\S 3.4]{Lieblich_ANT}, Lieblich constructed the ``twisted objects and rigidification" for a stack.  We recall it here. 
All the basic knowledge, like ``the relative small \'etale site" on a scheme $X$, see \cite[\S 3]{Lieblich_ANT}.

Let $A$ be an abelian group scheme on $X$, and $\XX\to X$ an $A$-gerbe over $X$. Then we have 
$A_{\XX}\cong I\XX$, where $A_{\XX}$ is the sheaf of abelian group $A$ on $\XX$ and $I\XX$ is the inertia stack of $\XX$. 
In the following let us fix a stack $\ZZ\to X$ over the scheme $X$, and let 
$$\chi: A\to I\ZZ$$
be a central injection into the inertia stack of $\ZZ$.

\begin{defn}\label{defn_twisted_section_ZZ}
For the stack $\ZZ$, an $\XX$-twisted section of $\ZZ$ over a scheme $T\to X$ is a $1$-morphism $f: \XX\times_{X}T\to \ZZ$ such that the induced map $A\to I(\XX\times_{X}T)\to f^*(I\ZZ)$ is identified with the pullback under $f$ of the canonical inclusion $\chi: A\to I\ZZ$.

We denote by $\ZZ^{\XX}$ the substack of the Hom stack $\Hom_{X}(\XX,\ZZ)$ consisting of $\XX$-twisted sections of 
$\ZZ$.
\end{defn}

\subsubsection{Cocycle twisted objects}\label{subsubsec_cocycle}

The $A$-gerbes on $X$ are classified by the second \'etale cohomology group $H^2_{\et}(X,A)$. Then we can represent the $A$-gerbe $\XX\to X$ as a cohomology class $\alpha_{\XX}\in H^2_{\et}(X,A)$.  Let 
$\{U_{\bullet}\}\to X$ be a hypercovering such that 
\begin{enumerate}
\item there exists a section $\sigma$ of $\XX$ on $U_0$, and \\
\item the two pullbacks of $\sigma$ to $U_1$ are isomorphic via $\phi$.
\end{enumerate}
The coboundary of $\phi$ gives the cocycle $\alpha_{\XX}$. In general, this $A$-gerbe is the stack of ``twisted $A$-torsors". 

\begin{defn}\label{defn_cocycle_twisted_sections}
Given a class $\alpha_{\XX}\in H^2_{\et}(X,A)$, we fix a hypercover $\{U_{\bullet}\}$ and write the cocycle as 
$(\alpha_{\XX}, U_{\bullet})$.  A $(\alpha_{\XX}, U_{\bullet})$-twisted section of the stack $\ZZ$ over 
$T\to X$ is given by 

$\bullet$  a $1$-morphism $\phi: U_0\times_{X}T\to \ZZ$, and 

$\bullet$ a $2$-morphism $\psi: (p_0^\prime)^*\phi\stackrel{\sim}{\longrightarrow}(p_1^\prime)^*\psi$,
where $p_0^\prime$ and $p_1^\prime$ are the two natural maps $U_1\times_{X}T\to U_0\times_{X}T$, subject to 
the condition that the coboundary $\delta\psi\in \Aut((p_0^\prime)^*\phi)$ is equal to the action of $\alpha_{\XX}$.
\end{defn}

\begin{prop}(\cite[Proposition 3.4.1.7]{Lieblich_ANT})
There is a natural equivalence between the stack of $\XX$-twisted objects of $\ZZ$ and 
$(\alpha_{\XX}, U_{\bullet})$-twisted objects of $\ZZ$.
\end{prop}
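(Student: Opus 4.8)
The plan is to set up the equivalence by descent along the fixed hypercover $\{U_\bullet\}\to X$, using the section $\sigma$ to pass between a global twisted morphism out of the gerbe and its local cocycle data. First I would build a functor from $\XX$-twisted sections to $(\alpha_{\XX},U_\bullet)$-twisted sections. Given an $\XX$-twisted section $f\colon\XX\times_X T\to\ZZ$, property (1) of the hypercover supplies the section $\sigma\colon U_0\to\XX$, hence a morphism $\sigma\times\id_T\colon U_0\times_X T\to\XX\times_X T$; set $\phi:=f\circ(\sigma\times\id_T)$. Property (2) provides an isomorphism $\theta$ between the two pullbacks of $\sigma$ to $U_1$, and applying $f$ turns $\theta$ into a $2$-morphism $\psi\colon(p_0^\prime)^*\phi\xrightarrow{\sim}(p_1^\prime)^*\phi$. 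Since the coboundary $\delta\theta$ is exactly the $A$-valued cocycle $\alpha_{\XX}$ sitting in the inertia of $\XX$, and since the $\XX$-twisting condition forces $f$ to carry this inertia into $\ZZ$ through the central injection $\chi$, the coboundary $\delta\psi$ equals the action of $\alpha_{\XX}$. Thus $(\phi,\psi)$ is an $(\alpha_{\XX},U_\bullet)$-twisted section.

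Conversely, I would recover $f$ from $(\phi,\psi)$ by $2$-categorical descent for $1$-morphisms of stacks. The gerbe $\XX\times_X T$ is obtained by gluing the trivialized charts $U_0\times_X T$ along $U_1\times_X T$ with transition governed by $\alpha_{\XX}$; a $1$-morphism from it to $\ZZ$ therefore amounts to a $1$-morphism $\phi$ on the chart together with a gluing $2$-morphism $\psi$ whose coboundary over $U_2\times_X T$ realizes the $\alpha_{\XX}$-twisted cocycle identity. The hypothesis $\delta\psi=\alpha_{\XX}$ is precisely effectivity of this descent datum once $A$ is mapped into $I\ZZ$ by $\chi$, and centrality of $\chi$ ensures the glued morphism $f$ is well defined and again satisfies the twisting condition on inertia.

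Finally I would verify that these two assignments are quasi-inverse, functorially in $T$ and compatibly with base change, which upgrades them to an equivalence of the two stacks. The main obstacle is the bookkeeping at the level of $2$-morphisms: one must check that the gluing datum $(\phi,\psi)$ is effective for $1$-morphisms into $\ZZ$, and—most delicately—identify the coboundary $\delta\psi\in\Aut((p_0^\prime)^*\phi)$ with the gerbe class $\alpha_{\XX}$ transported through the central injection $\chi$, in both directions of the correspondence. Everything else follows formally from the universal property of the $\Hom$-stack $\Hom_X(\XX,\ZZ)$ and from descent along the hypercover.
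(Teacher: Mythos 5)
The paper does not prove this proposition at all: it is quoted verbatim from Lieblich (Proposition 3.4.1.7 of the cited reference), so there is no in-paper argument to compare against. Your sketch is the standard proof and matches the one in Lieblich's paper in outline: use the section $\sigma$ over $U_0$ to restrict a twisted section $f$ to cocycle data $(\phi,\psi)$, use the $\XX$-twisting condition on inertia to identify $\delta\psi$ with the action of $\alpha_{\XX}$ through $\chi$, and recover $f$ by descent. The only place that deserves more care than you give it is the converse direction: descent of a $1$-morphism into $\ZZ$ along $U_0\times_X T\to\XX\times_X T$ requires descent data over the fiber product $(U_0\times_X T)\times_{\XX\times_X T}(U_0\times_X T)$, which is not literally $U_1\times_X T$; one needs that $\sigma$ and the isomorphism of its two pullbacks promote $U_\bullet\times_X T$ to a hypercover of the gerbe $\XX\times_X T$ itself, and then invoke that $\Hom$ into a stack satisfies descent along hypercovers. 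With that point made explicit, the argument is complete.
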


\subsubsection{Pushing forward rigidifications}\label{subsubsec_pushing}

We recall the pushing forward and rigidification construction of Lieblich and Abramovich-Corti-Vistoli \cite{ACV}. 
Let $\XX\to X$ be an $A$-gerbe and $\ZZ$ an $X$-stack.  Assume that there is a central injection:
$$A_{\ZZ}\hookrightarrow I\ZZ,$$
then  we denote by 
$\ZZ \fatslash A$ the ``rigidification" of $\ZZ$ along $A$. 
Let $f: X\to S$ be a smooth morphism of schemes. From \cite[\S 3.4.2.]{Lieblich_ANT}, there is a morphism
$$f_*(\ZZ)\to f_*(\ZZ\fatslash A).$$
This morphism implies that given a morphism 
$$T\to S$$
of schemes, and a $1$-morphism $g: T\to f_*(\ZZ\fatslash A)$, there is an $A_T$-gerbe on $X\times_{T}S$ which is given by the morphism $\ZZ\to \ZZ\fatslash A$ (which is an $A$-gerbe) and the fact that $T\to f_*(\ZZ\fatslash A)$ corresponds to a morphism $X\times_{S}T\to \ZZ\fatslash A$. 

Let us recall the  gerbe $\XX$-twisted part of  $f_*(\ZZ\fatslash A)$.

\begin{defn}\label{defn_A_gerbe_twisted_ZZ}(Lieblich)
Let $\XX\to A$ be an $A$-gerbe.  We use  $f_*^{\XX}(\ZZ\fatslash A)$ to denote by the stack-theoretic image of  $f_*(\ZZ^{\XX})$ under the natural morphism 
$f_*(\ZZ^{\XX})\hookrightarrow f_*(\ZZ)\to f_*(\ZZ\fatslash A)$. The part $f_*^{\XX}(\ZZ\fatslash A)$ is called the ``$\XX$-twisted" part of $f_*(\ZZ\fatslash A)$.
\end{defn}

\subsubsection{Deformation theory}\label{subsubsec_Deformation}

We also recall the deformation theory results in  \cite[\S 3.4.3.]{Lieblich_ANT}. For our purpose, we always assume $f: X\to S$ is a smooth morphism between smooth schemes.  In particular we will take $S=\spec(\kappa)$. 

Let $A$ be an abelian group scheme over $X$.  Then from  \cite[Lemma 3.4.3.1]{Lieblich_ANT}, the natural morphism 
$$f_*\sB A\to R^1f_*(A)$$
is a $f_*A$-gerbe over $R^1f_*(A)$, and the cotangent complex of $f_*\sB A$ is trivial. We list a result of Lieblich here for a later use. 

\begin{prop}\label{prop_Lieblich_section3}(\cite[Corollary 3.4.3.3, Proposition 3.4.3.5]{Lieblich_ANT})
The natural map
$$\gamma: f_*(\ZZ^{\XX})\fatslash f_*(A)\to f_*^{\XX}(\ZZ\fatslash A)$$
is representable by finite \'etale covers.  

Also the stack $f_*(\ZZ^{\XX})$ is an Artin (resp. DM) stack if and only if the stack $f_*^{\XX}(\ZZ\fatslash A)$ is an Artin 
(resp. DM) stack. 
\end{prop}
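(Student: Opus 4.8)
The plan is to recover Lieblich's argument by comparing $\gamma$ with the universal local model supplied by the $f_*A$-gerbe $f_*\sB A\to R^1f_*(A)$ of \cite[Lemma 3.4.3.1]{Lieblich_ANT}, and then transporting finiteness and étaleness through this comparison. First I would check that $\gamma$ is well defined. The composite $f_*(\ZZ^\XX)\hookrightarrow f_*(\ZZ)\to f_*(\ZZ\fatslash A)$ factors through the stack-theoretic image $f_*^\XX(\ZZ\fatslash A)$ by Definition \ref{defn_A_gerbe_twisted_ZZ}. Pushing the central injection $\chi\colon A\hookrightarrow I\ZZ$ forward, and using that $f_*$ commutes with the formation of inertia for the smooth $f$, one obtains a central subgroup $f_*(A)\subset I\bigl(f_*(\ZZ^\XX)\bigr)$ whose image in the inertia of $f_*(\ZZ\fatslash A)$ is trivial, since that is precisely what rigidification along $A$ removes. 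The universal property of rigidification then yields the factorization through $f_*(\ZZ^\XX)\fatslash f_*(A)$ defining $\gamma$. As $f_*^\XX(\ZZ\fatslash A)$ is the stack-theoretic image of $f_*(\ZZ^\XX)$, the composite $f_*(\ZZ^\XX)\to f_*^\XX(\ZZ\fatslash A)$ is an epimorphism, and right-cancellation shows $\gamma$ is an epimorphism.

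The heart of the matter is the fibral analysis. Fix a $T$-point of $f_*^\XX(\ZZ\fatslash A)$; as recalled after Definition \ref{defn_A_gerbe_twisted_ZZ} it is a section $\bar s$ of $\ZZ\fatslash A$ over $X\times_S T$ lying in the twisted part, so that the pulled-back $A$-gerbe $\bar s^*\ZZ$ is isomorphic to $\XX$. An object over this point is an $\XX$-twisted lift of $\bar s$ along the $A$-gerbe $\ZZ\to\ZZ\fatslash A$, taken modulo the rigidified automorphism group $f_*(A)=H^0(X\times_S T,A)$. Any two such lifts differ by an $A$-torsor on $X\times_S T$, so once the automorphisms $f_*(A)$ are rigidified away the fiber becomes a torsor under $R^1f_*(A)$. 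By \cite[Lemma 3.4.3.1]{Lieblich_ANT} this torsor is modelled on $f_*\sB A\fatslash f_*(A)\cong R^1f_*(A)$: the triviality of the cotangent complex of $f_*\sB A$ forces $\gamma$ to be formally étale, while $R^1f_*(A)$ is a finite group scheme over $S$ (for $A=\mu_r$ with $X$ smooth and proper over the algebraically closed $\kappa$ one computes $R^1f_*(\mu_r)\cong\Pic(X)[r]$ via the Kummer sequence). A torsor under a finite étale group scheme is a finite étale cover, and this also yields representability of $\gamma$; hence $\gamma$ is representable by finite étale covers.

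For the second assertion I would transfer algebraicity across the two operations separately. A representable finite étale morphism is representable, faithfully flat, and locally of finite presentation, so the property of being an Artin (resp. DM) stack both descends and ascends along $\gamma$; likewise rigidification along the flat, finitely presented group $f_*(A)$ preserves and reflects the Artin (resp. DM) property. Chaining the two equivalences through $\gamma$ shows that $f_*(\ZZ^\XX)$ is Artin (resp. DM) if and only if $f_*^\XX(\ZZ\fatslash A)$ is.

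I expect the finiteness half of the fibral step to be the main obstacle. Étaleness is soft: it follows formally from the vanishing of the cotangent complex of the local gerbe model. The difficulty is to see the fiber as a genuinely finite cover rather than an unbounded $H^1$-torsor, which forces one to pin the $A$-torsor of lifts down to a torsor under the finite group scheme $R^1f_*(A)$; this is where the properness of $X$ and the finiteness of $A$ enter, and where reconciling finiteness with étaleness requires care when the residue characteristic divides $r$.
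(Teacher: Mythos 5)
The paper gives no proof of this proposition at all --- it is quoted verbatim from Lieblich with the citation \cite[Corollary 3.4.3.3, Proposition 3.4.3.5]{Lieblich_ANT} --- so the only meaningful comparison is with Lieblich's original argument, and your reconstruction follows it in essentially the same way: identifying the fibers of $\gamma$ as torsors under $f_*\sB A\fatslash f_*(A)\cong R^1f_*(A)$, extracting \'etaleness from the triviality of the cotangent complex of $f_*\sB A$ and finiteness from that of $R^1f_*(A)$, and then transferring the Artin/DM property along the representable finite \'etale $\gamma$ and along rigidification. The caveat you raise about the case where the characteristic divides $r$ is real but is a hypothesis management issue inherited from the source, not a gap you introduced.
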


The result in Proposition \ref{prop_Lieblich_section3} implies the relationship of virtual fundamental classes on 
$f_*(\ZZ^{\XX})$ and $f_*^{\XX}(\ZZ\fatslash A)$ if they admit perfect obstruction theories, see Proposition 3.4.3.7, and Corollary 3.4.3.8. in \cite{Lieblich_ANT}.

\section{Moduli stack construction of projective   Higgs  bundles}\label{sec_moduli_projective_Higgs_bundles}

In this section we present two approaches for the construction of the moduli stack compactification of $\PGL_r$-Higgs bundles using the result in \S \ref{sec_preliminaries}. In \cite[\S 4, \S 5]{Lieblich_ANT}, Lieblich constructed some compactification of the moduli of $\PGL_r$-torsors and his method works for the stack of Higgs bundles. 

\subsection{Abstract approach}\label{subsec_abstract_approach}

Let us fix a smooth proper morphism $f: X\to S$, and $p: \XX\to X$ a cyclic group $\mu_r$-gerbe over $X$. 
The map $f$ also represents the geometric morphism $X_{r\et}\to S_{\et}$ as in \cite[\S 2]{Lieblich_ANT}. We recall the stack of twisted Higgs sheaves in \cite{Jiang_2019}.

\subsubsection{Gerbe twisted sheaves}\label{subsubsec_gerbe_twisted_sheaf}

Let $\pi: \XX\to X$ be a $\mu_r$-gerbe and we denote by $[\XX]$ to be the class in $H^2(X,\mu_r)$.  Let $E$ be a   sheaf on $\XX$, then  there is a natural right group action
$$\mu: E\times I\XX\to E,$$
see \cite[Lemma 2.1.1.8]{Lieblich_Duke}. 
 Let $\chi: \mu_r\to \Gm$ be the inclusion character morphism.  Let $E$ be a coherent $\sO_{\XX}$-module, the module action $m: \Gm\times E\to E$ yields an associated right action $m^\prime: E\times\Gm\to E$ by 
$m^\prime(s,\varphi)=m(\varphi^{-1},s)$.

\begin{defn}\label{defn_gerbe_twisted_sheaf}
A $\chi$-twisted sheaf or $\XX$-twisted sheaf on $\XX$ is a  coherent $\sO_{\XX}$-module $E$ such that the natural action $\mu: E\times \mu_r\to E$ given by the $\mu_r$-gerbe structure
makes the diagram
\[
\xymatrix{
E\times \mu_r\ar[r]\ar[d]_{\chi}& E\ar[d]^{\id}\\
E\times \Gm\ar[r]^{m^\prime}& E
}
\]
commutes. 
\end{defn}

\begin{example}\label{example_Gm_gerbe_twist}
We also can define the $\Gm$-gerbe $\XX\to X$ twisted sheaf.  Then let $\{U_i\}$ be an open covering of $X$.  A  $\XX$-twisted sheaf on $X$ is given by:

$\bullet$ a sheaf of modules $E_i$ on each $U_i$;

$\bullet$ for each $i$ and $j$ an isomorphism of modules 
$g_{ij}: E_j|_{U_{ij}}\stackrel{\cong}{\longrightarrow}E_i|_{U_{ij}}$ such that on $U_{ijk}$, 
$E_k|_{U_{ijk}}\stackrel{\cong}{\longrightarrow}E_k|_{U_{ijk}}$ is equal to the multiplication by the scalar
$a\in \Gm(U_{ijk})$ giving the $2$-cocycle $[\XX]\in H^2(X,\Gm)$. 
\end{example}

\subsubsection{Twisted Higgs sheaves}\label{subsubsec_twisted_Higgs_sheaf}

We fix a polarization $\sO_{X}(1)$ on $X$ for the $\mu_r$-gerbe $\XX\to X$.  

Let us denote by $\YY:=\Tot(K_{\XX})$, the canonical line bundle of $K_{\XX}$.  This $\YY$ is a Calabi-Yau  stack. 
Here we let $\Coh_{c}(\Tot(K_{\XX}))$ be the abelian category of compactly supported sheaves on $\Tot(K_{\XX})$. We fix the following diagram:
\begin{equation}\label{diagram_XX_YY}
\xymatrix{
\YY\ar[r]^{p}\ar[d]_{\pi}& Y\ar[d]^{\pi}\\
\XX\ar[r]^{p}& X
}
\end{equation}
where $Y=\Tot(K_X)$ is the total space of $K_X$, which is the coarse moduli space of $\YY$. 
It is not hard to see that 
the DM stack $\YY\to Y$ is also a $\mu_r$-gerbe, and the class 
$[\YY]\in H^2(Y,\mu_r)\cong H^2(X,\mu_r)$. 

\begin{defn}\label{defn_twisted_Higgs_pair_XX}(\cite[Definition 3.32]{Jiang_2019})
An $\XX$-twisted Higgs sheaf on $\XX$ is a pair $(E,\phi)$, where $E$ is a $\XX$-twisted  coherent sheaf on $\XX$ as in Definition \ref{defn_gerbe_twisted_sheaf}, and $\phi: E\to E\otimes K_{\XX}$ a $\sO_{\XX}$-linear morphism such that the following diagram 
\[
\xymatrix{
E\times \mu_r\ar[r]\ar[d]_{\chi}& E\ar[d]^{\id}\ar[r]^{\phi}& E\otimes K_{\XX}\ar[d]^{\id}\\
E\times \Gm\ar[r]^{m^\prime}& E\ar[r]^{\phi}& E\otimes K_{\XX}
}
\]
commutes.
\end{defn}
From  \cite[Proposition 2.18]{JP} and \cite[Proposition 2.2]{TT1}, for a $\mu_r$-gerbe $\XX\to X$, 
there exists an abelian category $\Higg^{\tw}_{K_{\XX}}(\XX)$ of $\XX$-twisted Higgs pairs on $\XX$ and an equivalence:
\begin{equation}\label{eqn_equivalence_twisted_categories}
\Higg^{\tw}_{K_{\XX}}(\XX)\stackrel{\sim}{\longrightarrow} \Coh^{\tw}_{c}(\YY)
\end{equation}
where $\Coh^{\tw}_{c}(\YY)$ is the category of compactly supported $\YY$-twisted coherent sheaves on $\YY$. 

As in \cite[\S 3.4]{Jiang_2019}, we let 
$\N^{\tw}_{\XX/X}(r,\sO)$ be the moduli stack of $\XX$-twisted Higgs sheaves $(E,\phi)$ with rank $r$ and 
$\det(E)=\sO$.  Note that we haven't put any stability. 
Here is a generalization of Lemma 4.2.2. in \cite{Lieblich_ANT} to Higgs sheaves:
\begin{prop}\label{prop_Higgs_stack_locally_finite}
Let $\rHT^{\sO}_{X/S}(r)$ be the stack of totally supported Higgs sheaves with fixed determinant $\sO$.  Then the stack 
$f_*\left(\rHT^{\sO}_{X/S}(r)\fatslash \mu_r\right)$ is an Artin stack locally of finite presentation over $S$.  If the morphism $f$ is smooth, then the stack is quasi-proper. 
\end{prop}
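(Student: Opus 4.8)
The plan is to deduce the statement from Lieblich's Lemma~4.2.2 in \cite{Lieblich_ANT}, which gives the analogous assertion for the stack $\rT^{\sO}_{X/S}(r)$ of totally supported sheaves with fixed determinant, by exploiting the forgetful morphism that remembers only the underlying sheaf. Concretely, sending a totally supported Higgs sheaf $(E,\phi)$ to $E$ defines a morphism $\rHT^{\sO}_{X/S}(r)\to \rT^{\sO}_{X/S}(r)$. This morphism is $\mu_r$-equivariant for the scaling actions used in the rigidifications (the Higgs field is compatible with the gerbe twisting, cf.\ Definition~\ref{defn_twisted_Higgs_pair_XX}), so it descends to a morphism of rigidifications $\rHT^{\sO}_{X/S}(r)\fatslash\mu_r\to \rT^{\sO}_{X/S}(r)\fatslash\mu_r$ and, being functorial in $X\to S$, is compatible with $f_*$. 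Thus I obtain a morphism
\[
\Phi: f_*\!\left(\rHT^{\sO}_{X/S}(r)\fatslash\mu_r\right)\longrightarrow f_*\!\left(\rT^{\sO}_{X/S}(r)\fatslash\mu_r\right),
\]
and the idea is to transfer the two desired properties across $\Phi$.

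First I would show that $\Phi$ is representable and affine. Over the target, with its universal sheaf $\sE$, a lift to the source is precisely a global section of $\cHom(\sE,\sE\otimes K_X)$ lying in the trace-free part (so as to preserve $\det(E)=\sO$), and the trace-free condition is a closed linear one. By Grothendieck's theory of the relative $\underline{\Hom}$ functor, the functor of such sections is representable by a linear $S$-scheme, the relative $\spec$ of a symmetric algebra, hence affine over the base; this is exactly the mechanism by which Higgs moduli are built as a cone over sheaf moduli. Consequently $\Phi$ is affine. Since affine morphisms preserve ``Artin'' and ``locally of finite presentation'', and Lieblich's Lemma~4.2.2 guarantees the target is an Artin stack locally of finite presentation over $S$, the source $f_*(\rHT^{\sO}_{X/S}(r)\fatslash\mu_r)$ is Artin and locally of finite presentation as well.

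For quasi-properness I would verify the existence part of the valuative criterion directly. Given a complete DVR $R$ with fraction field $K$ and a $K$-point of the source together with an $R$-point of the target extending its image, Lieblich's quasi-properness extends the underlying sheaf $E_K$ to an $R$-flat totally supported family $E_R$; it remains to extend the generic Higgs field $\phi_K$ across the special fibre. Here I would pass through the spectral correspondence of \cite[Proposition~2.6]{TT1}: a Higgs sheaf on $X$ is the same datum as a compactly supported sheaf on $Y=\Tot(K_X)$ set-theoretically supported on the zero section, and extending $\phi$ amounts to extending this $Y$-sheaf over $\spec R$. Because the generic member has a fixed Chern character, its limit is constrained to a fixed proper thickening of the zero section, and the moduli of compactly supported sheaves on $Y$ with fixed numerical invariants is proper (via a Quot scheme on a projective compactification of $Y$); hence a flat limit exists, yielding the required extension $\phi_R$. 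When $f$ is smooth the deformation-theoretic input of \cite[\S3.4.3]{Lieblich_ANT} then applies verbatim.

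The main obstacle I anticipate is precisely this last step: the forgetful morphism adds the \emph{affine} direction of Higgs fields, and affine morphisms do not preserve properness, so quasi-properness cannot be inherited formally from the target and must be re-proved. The crux is to rule out the Higgs field acquiring a ``pole'' in the limit, equivalently the support of the associated $Y$-sheaf running off to infinity along the fibres of $Y\to X$; the spectral correspondence together with properness of the moduli of compactly supported sheaves on the total space is what controls this.
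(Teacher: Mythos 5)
Your route to the Artin/locally-of-finite-presentation part is genuinely different from the paper's. The paper first invokes Proposition \ref{prop_Lieblich_section3} to reduce, over the sections of $R^2f_*(\mu_r)$, to the twisted stack $f_*\bigl(\bigl(\rHT^{\sO}_{X/S}(r)\bigr)^{\XX}\bigr)$, and then simply observes that this is an open substack of the stack of perfect $\XX$-twisted Higgs sheaves with trivial determinant. You instead exhibit $f_*\bigl(\rHT^{\sO}_{X/S}(r)\fatslash\mu_r\bigr)$ as an affine linear cone (the relative $\Hom$-scheme of Higgs fields, which descends through the rigidification because the central $\mu_r$ acts trivially on $\cHom(E,E\otimes K_X)$ by conjugation) over Lieblich's stack $f_*\bigl(\rT^{\sO}_{X/S}(r)\fatslash\mu_r\bigr)$, and import the two properties across that representable affine morphism. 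Both arguments work for this half, and yours is arguably more self-contained, since the paper's appeal to the stack of perfect twisted Higgs sheaves ultimately rests on the same cone-over-the-sheaf-stack mechanism. One small slip: fixing $\det(E)=\sO$ is a condition on $E$ alone, so there is no reason to restrict to trace-free $\phi$ here; this does not affect affineness.

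For quasi-properness there is a genuine gap in your argument. You correctly isolate the crux --- after extending the underlying sheaf by Lieblich's Lemma 4.2.2, the Higgs field could acquire a pole along the special fibre, equivalently the spectral sheaf on $Y=\Tot(K_X)$ could escape to infinity in the limit --- but the tool you propose to rule this out does not exist as stated. The moduli of compactly supported sheaves on the non-proper space $Y$ with fixed numerical invariants is \emph{not} proper: no stability is imposed at this stage, and a flat limit computed in a projective compactification $\overline{Y}$ can acquire support on the boundary divisor, which is exactly the degeneration you need to exclude. Fixing the Chern character does not confine the support to a fixed proper thickening of the zero section (the graph of $s/\pi^n$ for $s\in H^0(K_X)$ has bounded invariants but unbounded support as the uniformizer degenerates, and finite base change does not help), so ``a flat limit exists by properness of the Quot scheme'' assumes precisely what must be proved. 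To close this you would need an actual Langton-type or elementary-modification argument producing an extension of $(E,\phi)$ with $\phi$ regular, or a boundedness statement for the spectral support. For what it is worth, the paper's own proof is no more detailed at this exact step --- it disposes of the extension of $\phi$ with the single remark that $\phi$ is $\sO_{\XX}$-linear --- so you have at least located the point where the real work lies, even if your proposed justification does not supply it.
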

\begin{proof}
The proof is the same as  in \cite[Lemma 4.2.2.]{Lieblich_ANT}. It is sufficient to show locally (hence globally) the morphism 
$\gamma$ in Proposition \ref{prop_Lieblich_section3} makes $f_*\left(\rHT^{\sO}_{X/S}(r)\fatslash \mu_r\right)$ a $R^2f_*(\mu_r)$-stack of finite presentations.  Hence from the cohomology class of the $\mu_r$-gerbe given by a section of $R^2f_*(\mu_r)$, it is sufficient to show that given a $\mu_r$-gerbe $\XX\to X$, the stack $f^{\XX}_*\left(\rHT^{\sO}_{X/S}(r)\fatslash \mu_r\right)$  is an Artin stack locally of finite presentation. 

From Proposition \ref{prop_Lieblich_section3}, it is sufficient to show that $f_*\left(\left(\rHT^{\sO}_{X/S}(r)\right)^{\XX}\right)$ is an Artin stack locally of finite presentation.  This is true since it is an open substack of the stack of perfect $\XX$-twisted Higgs sheaves with trivial determinant. 

The proof of quasi-properness is similar with  \cite[Lemma 4.2.2.]{Lieblich_ANT}, and is reduced to prove $f_*\left(\left(\rHT^{\sO}_{X/S}(r)\right)^{\XX}\right)$  is quasi-proper.  Then the proof is reduced to show that given a discrete valuation ring $R$ and a torsion free $\XX$-twisted Higgs sheaf $(E,\phi)$ of rank $r$ with $\det(E)=\sO$ over the fractional field $F$ of $R$, there is an extension of $E$ to a flat family over a finite flat extension of $R$ such that the condition 
$\det(E)=\sO$ extends to the whole $R$. 
From   \cite[Lemma 4.2.2.]{Lieblich_ANT}, the coherent sheaf $E$ extends. It only need to check that the Higgs field 
$\phi: E\to E\otimes K_{\XX}$ extends too.  But this is from $\phi$ is $\sO_{\XX}$-linear.
\end{proof}

We then define the following:

\begin{defn}\label{defn_Abstract_Higgs_PGL}
Let $\XX\to X$ be a $\mu_r$-gerbe over the scheme 
$X$. We define
$$\rH^{\XX}_{r}:=f^{\XX}_*\left(\rHP^{\sO}_{X/S}(r)\fatslash \mu_r\right).$$
This is the stack of $\PGL_r$-Higgs torsors.
\end{defn}

\subsection{The construction using generalized Azumaya algebras}\label{subsec_construction_Azumaya}

We give the construction of generalized Higgs version of Azumaya algebras.  Lieblich used the derived Skolem-Noether in the derived category $D(\sO_X)$ of $\sO_X$-modules to study generalized Azumaya algebras. For the basic facts about the weak algebras, see \cite[\S 5.1]{Lieblich_ANT}.

\begin{defn}\label{defn_generalized_Azumaya}
A pre-generalized Azumaya algebra on $X$ is a perfect algebra $A$ in the derived category $D(\sO_X)$ of $\sO_X$-modules such that there exists an open cover $\{U_i\}\in X_{\et}$ and a totally supported perfect sheaf $E$ on $\{U_i\}$ such that 
$$A|_{U_i}\cong R\sE nd_{U_i}(E)$$
as weak algebras.  An isomorphism of pre-generalized Azumaya algebras is an isomorphism in the category of weak algebras. 
\end{defn}

\begin{defn}\label{defn_generalized_Higgs_Azumaya}
A pre-generalized  Higgs-Azumaya algebra on $X$ is given by a pair $(A,\Phi)$, where $A$ is a pre-generalized Azumaya algebra on $X$, and $\Phi$ is the Higgs section such that locally on $U$
we have a diagram
\[
\xymatrix{
A|_{U}\ar[r]^{\cong}\ar[d]& R\sE nd_{U}(E)\ar[d]\\
A\otimes K_{X}|_{U}\ar[r]^{\cong}&  R\sE nd_{U}(E\otimes K_{X})
}
\]
where $A\otimes K_X$ exists as a weak algebra. 
\end{defn}

\subsubsection{Lieblich's stack of generalized Azumaya algebras}

Using Definition \ref{defn_generalized_Azumaya} Lieblich defined the stack $\rA_{X}$ of generalized Azumaya algebras on $X$.  Let us recall the degree of the generalized Azumaya algebra $A$ on $X$.  
Associated to $A$, there is a $\Gm$-gerbe $\XX(A)\to X$, called  the gerbe of trivializations of $A$, is by definition the stack on the small site $X_{\et}$ such that its sections over $U\to X$ is given by pairs $(E,\varphi)$, where $E$ is a totally supported sheaf on $U$ and 
$\varphi: R\sE nd_U(E)\stackrel{\sim}{\longrightarrow} A|_{U}$ an isomorphism of generalized Azumaya algebras. 

\begin{defn}\label{defn_degree_Azumaya_algebra}
The generalized  Azumaya algebra $A$ on $X$ is of degree $r$ if for any $U\to X$, 
$$A|_U\cong R\sE nd_U(E)$$
and $E$ has rank $r$ such that $\rk(A)=r^2$. 
The degree of a generalized  Higgs-Azumaya algebra  $(A,\Phi)$ is the degree of $A$.
\end{defn}

Let $A$ be a generalized Azumaya algebra of degree $r$ on $X$.  We define the ``gerbe of trivialized trivialization" of $A$, which is denoted by $\XX_{\triv}(A)$, to be  the  stack on the small \'etale site of $X$ such that its sections over $U\to X$ are given by triples $(E,\varphi, \delta)$, where on an open cover $U\to X$, 
$$\varphi: R\sE nd_U(E)\cong A|_{U}$$
is the isomorphism of generalized Azumaya algebras and 
$\delta: \sO_U\stackrel{\sim}{\longrightarrow} \det(E)$ is the isomorphism of invertible sheaves  after taking determinant. 
The isomorphisms in the fiber categories are isomorphisms of the sheaves which preserve the identifications with $A$ and the trivializations of the determinants. 

\begin{rmk}
\begin{enumerate}
\item From \cite[Lemma 5.2.1.8]{Lieblich_ANT}, the gerbe $\XX(A)\to X$ is a $\Gm$-gerbe on $X$; and its cohomology class 
is $[\XX(A)]\in H^2(X,\Gm)$. 
\item The $\mu_r$-gerbe $\XX_{\triv}(A)\to X$ determines a cohomology class $[\XX_{\triv}(A)]\in H^2(X,\mu_r)$ which maps to $[\XX(A)]\in H^2(X,\Gm)$ from the exact sequence
$$\cdots H^1(X,\Gm)\rightarrow H^2(X,\mu_r)\rightarrow H^2(X,\Gm)\cdots $$
which is given by the short exact sequence:
$$1\to \mu_r\longrightarrow \Gm\stackrel{(\cdot)^r}\longrightarrow \Gm\to 1.$$
\end{enumerate}
\end{rmk}

\subsubsection{Stack of generalized Higgs-Azumaya algebras}

\begin{defn}\label{defn_Higgs_Azumaya}
Define $\rHA_{X}(r)$ to be the stack of generalized Higgs-Azumaya algebras on $X$ of degree $r$. 
\end{defn}

We generalize Proposition 5.2.2.1 and Proposition 5.2.2.3 in \cite{Lieblich_ANT}.

\begin{prop}\label{prop_Higgs_Azumaya_morphisms}
The morphism 
$$\gamma: \rHT_{X}^{\pparf}(r)\to \rHA_X(r)$$
given by
$$(E,\phi)\mapsto (R\sE nd(E), \Phi)$$
gives rise to an isomorphism 
$$\rHT_X(r)\fatslash \Gm\stackrel{\sim}{\longrightarrow}\rHA_X(r).$$
And the morphism 
$$\gamma^{\sO}: \rHT_X^{\sO}(r)\to \rHA_X(r)$$
given by 
$$(E,\phi)\mapsto (R\sE nd(E), \Phi)$$
yields an isomorphism $\rHT_X^{\sO}(r)\fatslash \mu_r\stackrel{\sim}{\longrightarrow}\rHA_X(r)$.
\end{prop}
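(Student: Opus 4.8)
The plan is to deduce both isomorphisms from Lieblich's non-Higgs statements $\rT_X(r)\fatslash\Gm\xrightarrow{\sim}\rA_X(r)$ and $\rT_X^{\sO}(r)\fatslash\mu_r\xrightarrow{\sim}\rA_X(r)$ (Propositions 5.2.2.1 and 5.2.2.3 of \cite{Lieblich_ANT}) by showing that the Higgs field is transported faithfully along the forgetful maps. First I would fix the functor $\gamma$: to a totally supported Higgs sheaf $(E,\phi)$ it assigns the generalized Azumaya algebra $A:=R\sE nd(E)$ equipped with the adjoint Higgs section $\Phi:=[\phi,\,\cdot\,]\colon A\to A\otimes K_X$. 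Since $[\phi,\,\cdot\,]$ is a derivation of the weak algebra $A$ whose formation is \'etale-local and functorial in $(E,\phi)$, the pair $(A,\Phi)$ satisfies the local-trivialization condition of Definition \ref{defn_generalized_Higgs_Azumaya}: any local identification $A|_U\cong R\sE nd_U(E)$ carries $\Phi|_U$ to the adjoint field of $\phi|_U$, which is precisely the commuting square demanded there. Scalars $\lambda\in\Gm\subset\Aut(E,\phi)$ act on $A$ by conjugation, hence trivially, and fix $\Phi$; thus $\gamma$ factors through $\rHT_X(r)\fatslash\Gm$, and in the determinant-fixed case only $r$-th roots of unity preserve $\det(E)=\sO$, so $\gamma^{\sO}$ factors through $\rHT_X^{\sO}(r)\fatslash\mu_r$.

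For full faithfulness I would invoke the derived Skolem--Noether theorem of \cite{Lieblich_ANT}: an isomorphism $R\sE nd(E)\cong R\sE nd(F)$ of weak algebras is, \'etale-locally, conjugation by an isomorphism $g\colon E\xrightarrow{\sim} F$ that is unique up to the central $\Gm$. The extra input in the Higgs setting is that such a $g$ is automatically Higgs-compatible once the algebra isomorphism matches the adjoint fields: if it sends $[\phi,\,\cdot\,]$ to $[\psi,\,\cdot\,]$ then $[\,g\phi g^{-1}-\psi,\,\cdot\,]=0$, so $g\phi g^{-1}-\psi$ is central. Because the adjoint map $\phi\mapsto[\phi,\,\cdot\,]$ annihilates exactly the scalar Higgs fields $K_X\cdot\id_E$, this discrepancy is a local section $c$ of $K_X$. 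Hence, granting Lieblich's descent that yields the underlying isomorphism of stacks, the only further thing to control is this central ambiguity, and full faithfulness reduces to injectivity of $\phi\mapsto[\phi,\,\cdot\,]$ modulo the part already absorbed by the rigidification.

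For essential surjectivity, given $(A,\Phi)$ I would first use Lieblich to trivialize $A\cong R\sE nd(E)$ by the universal twisted sheaf on the gerbe of trivializations $\XX(A)$, and then lift the derivation $\Phi$ to an honest Higgs field $\phi$ on $E$ with $[\phi,\,\cdot\,]=\Phi$. Local lifts $\phi_U$ exist since every derivation of $R\sE nd_U(E)$ is inner, but two choices differ by a central term $c_{UV}\,\id\in K_X(U\cap V)$, and these assemble into a cocycle with class in $H^1(X,K_X)$; controlling this class is \emph{the main obstacle}, and it is exactly what is absent in Lieblich's purely sheaf-theoretic setting. In the determinant-fixed case the induced field on $\det(E)=\sO$ is $\tr(\phi)$, and requiring it to be the zero Higgs field forces $\tr(\phi)=0$; in characteristic prime to $r$ this pins down the lift $\phi$ uniquely and cuts the scalar ambiguity down to $\mu_r$, so the $H^1(X,K_X)$ obstruction disappears and $\gamma^{\sO}$ is an equivalence. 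For the $\Gm$-statement I would instead track the central Higgs fields as part of the descent datum over each base $T$, showing that the liftings $\phi$ form a torsor under $K_X\cdot\id$ that is trivialized compatibly with $\fatslash\Gm$, so that the fibers of $\gamma$ reduce to precisely the $\Gm$-gerbes of Lieblich's argument; verifying that this central $K_X$-contribution is genuinely swallowed by the rigidification, rather than leaving a residual $K_X$-gerbe, is the delicate point where the Higgs case diverges from the classical one.
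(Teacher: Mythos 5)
Your overall strategy coincides with the paper's: the proof given there simply cites Lieblich's Propositions 5.2.2.1 and 5.2.2.3 for the underlying equivalences $\rT_X(r)\fatslash\Gm\cong\rA_X(r)$ and $\rT_X^{\sO}(r)\fatslash\mu_r\cong\rA_X(r)$ and asserts that ``the only thing to check is that the morphisms are compatible with the Higgs fields.'' You go further than the paper by actually attempting that check, and in doing so you put your finger on the real issue --- but you do not close it. Under your reading, where $\Phi$ is the inner derivation $[\phi,\cdot]$, the assignment $\phi\mapsto[\phi,\cdot]$ has kernel $H^0(U,K_X)\cdot\id_E$ on every open set; as you observe, this produces (i) a residual central ambiguity in full faithfulness that the rigidification $\fatslash\Gm$ (which removes $\Gm$ from automorphism groups but does not identify $(E,\phi)$ with $(E,\phi+c\,\id)$) does not obviously absorb, and (ii) a gluing class in $H^1(X,K_X)$ obstructing essential surjectivity. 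You explicitly label these ``the main obstacle'' and ``the delicate point'' without resolving them, so as written only the determinant-fixed case, where the traceless normalization rigidifies the lift, is actually carried through; the $\Gm$-statement is left open. Indeed, under the derivation reading the $\Gm$-statement appears to be genuinely false when $H^0(X,K_X)\neq 0$: $(E,\phi)$ and $(E,\phi+c\,\id)$ with $c\neq 0$ are not even locally isomorphic (compare traces), hence remain distinct in $\rHT_X(r)\fatslash\Gm$, yet they have the same image $(R\sE nd(E),[\phi,\cdot])$.

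The difficulty is an artifact of the interpretation rather than of the mathematics the paper intends. Definition \ref{defn_generalized_Higgs_Azumaya} is loosely stated, but the only reading under which the paper's one-line proof (and the later identification $R\cHom(\rE,\rE)_0^{\perp}\cong \mathbf{L}\widetilde{p}^*\HA_0$) goes through is that the Higgs datum of a Higgs--Azumaya algebra is $\phi$ itself, transported as a local section of $R\sE nd(E)\otimes K_X\cong A\otimes K_X$ --- an adjoint-valued one-form, not the derivation it induces. With that reading the correspondence between $\phi$ and $\Phi$ is a tautological bijection with no central kernel, both of your obstructions vanish identically, and the proposition does reduce to Lieblich's statements exactly as claimed. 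You should either adopt that definition explicitly (in which case your faithfulness and surjectivity discussions collapse to the observation that derived Skolem--Noether isomorphisms automatically carry one adjoint-valued section to the other), or, if you keep the derivation formulation, confront the counterexample above rather than deferring the central $K_X$-torsor to a ``delicate point.''
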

\begin{proof}
The proof for the generalized Azumaya algebra $A$ and the corresponding sheaf $E$ are given in 
\cite[Proposition 5.2.2.1, Proposition 5.2.2.3]{Lieblich_ANT}. The only thing to check is that the morphisms are compatible with the Higgs fields $\phi$ and $\Phi$. 
\end{proof}

\begin{rmk}
Forgetting about the Higgs fields or setting $\phi=0, \Phi=0$, we get the morphisms 
$$\gamma: \rT_X(r)\to \rA_X(r)$$
and 
$$\gamma^{\sO}: \rT_X^{\sO}(r)\to \rA_X(r)$$
in \cite{Lieblich_ANT}. 
\end{rmk}

Now let $f: X\to S$ be a smooth morphism and we define the relative generalized Azumaya algebras. 
\begin{defn}\label{defn_Higgs_Azumaya_relative}
A relative generalized Higgs-Azumaya algebra on $X/S$ is a generalized Higgs-Azumaya algebra $(A,\Phi)$ on $X$
such that the local sheaves are $S$-flat and totally pure on each geometric fiber. 
\end{defn}

\begin{rmk}
The definition \ref{defn_Higgs_Azumaya_relative} is equivalent to the following: 
$A\cong R\pi_*R\sE nd_{\XX}(E)$ where 
$\XX\to X$ is a $\Gm$-gerbe and $E$ is a $S$-flat $\XX$-twisted sheaf which is totally 
pure on each geometric fiber. 
\end{rmk}

\begin{defn}\label{defn_HGA}
We define $\HGA_{\XX/S}(r)$ to be the stack of generalized Higgs-Azumaya algebras on $X/S$ of rank 
$r^2$ such that on each geometric fiber of $f: X\to S$ the class of the Azumaya algebra agrees with 
$[\XX]\in H^2(X,\mu_r)$ \'etale locally around every point in the base. 

We use the notation $\HGA_{X/S}(r)$ to represent the stack of generalized Higgs-Azumaya algebras on $X/S$ of rank 
$r^2$ on each fiber without mentioning the class $[\XX]$.
\end{defn}

\begin{prop}\label{prop_HGA_Hr}
There is an isomorphism of stacks:
$$\HGA_{\XX/S}(r)\stackrel{\sim}{\longrightarrow}\rH_r^{\XX}.$$
\end{prop}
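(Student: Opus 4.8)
The plan is to reduce the statement to the absolute comparison already recorded in Proposition \ref{prop_Higgs_Azumaya_morphisms} by pushing forward along $f$ and then isolating the piece that sees the fixed gerbe class $[\XX]$. First I would invoke Proposition \ref{prop_Higgs_Azumaya_morphisms}, in the pure variant appropriate to the relative setting, so that the functor $(E,\phi)\mapsto(R\sE nd(E),\Phi)$ induces an isomorphism $\rHP^{\sO}_{X}(r)\fatslash\mu_r\stackrel{\sim}{\longrightarrow}\rHA_X(r)$ of stacks on $X$; the pureness on the source is exactly what matches the fiberwise total purity demanded of a relative generalized Higgs-Azumaya algebra in Definition \ref{defn_Higgs_Azumaya_relative}. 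Applying the relative pushforward $f_*$ to both sides yields an isomorphism of $S$-stacks $f_*\!\left(\rHP^{\sO}_{X/S}(r)\fatslash\mu_r\right)\stackrel{\sim}{\longrightarrow}f_*\!\left(\rHA_X(r)\right)$, the target being the moduli of relative generalized Higgs-Azumaya algebras of degree $r$.

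Next I would equip each side with its natural morphism to $R^2f_*(\mu_r)$ recording the relevant cohomology class, and check that these morphisms agree under the isomorphism above. On the Higgs side the class is the $\mu_r$-gerbe class attached to the rigidification $\rHP^{\sO}\to\rHP^{\sO}\fatslash\mu_r$, pushed forward through the formalism of \S\ref{subsubsec_Deformation} (compare Proposition \ref{prop_Lieblich_section3}); by Definition \ref{defn_Abstract_Higgs_PGL} the $\XX$-twisted part $f_*^{\XX}\!\left(\rHP^{\sO}_{X/S}(r)\fatslash\mu_r\right)=\rH_r^{\XX}$ is precisely the fiber over $[\XX]$. On the Azumaya side the class is read off from the gerbe of trivialized trivializations $\XX_{\triv}(A)$, whose class $[\XX_{\triv}(A)]\in H^2(X,\mu_r)$ lifts $[\XX(A)]\in H^2(X,\Gm)$, and by Definition \ref{defn_HGA} the substack $\HGA_{\XX/S}(r)$ is exactly the locus where this class equals $[\XX]$ \'etale locally on the base. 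Granting compatibility of the two class maps, restricting the displayed isomorphism over $[\XX]$ identifies $\rH_r^{\XX}$ with $\HGA_{\XX/S}(r)$.

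The compatibility of the class maps is cleanest to see through the gerbe of trivializations directly. By the remark following Definition \ref{defn_Higgs_Azumaya_relative}, an object $(A,\Phi)$ of $\HGA_{\XX/S}(r)$ has the shape $A\cong R\pi_*R\sE nd_{\XX}(E)$ for an $S$-flat, fiberwise totally pure $\XX$-twisted sheaf $E$ with $\det(E)\cong\sO$, and $\XX_{\triv}(A)$ recovers the datum of $E$ up to the residual $\mu_r$-ambiguity, that is, up to rigidification. Hence $(A,\Phi)\mapsto[E]$ and $[E]\mapsto\bigl(R\pi_*R\sE nd_{\XX}(E),\Phi\bigr)$ are mutually inverse, and both respect the class in $R^2f_*(\mu_r)$ because forming $R\sE nd$ is insensitive to a $\mu_r$-twist. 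The Higgs field is carried along with no extra work: following the proof of Proposition \ref{prop_Higgs_stack_locally_finite}, the field $\Phi$ on $A=R\sE nd(E)$ is the one induced by the $\sO_{\XX}$-linear map $\phi\colon E\to E\otimes K_{\XX}$ through the diagram of Definition \ref{defn_generalized_Higgs_Azumaya}, and this assignment is reversible, so the equivalence on underlying sheaves upgrades verbatim to Higgs data.

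I expect the main obstacle to be the bookkeeping of the second paragraph, namely verifying that the $\mu_r$-gerbe class produced by the pushforward-rigidification formalism on the abstract side coincides with the class of $\XX_{\triv}(A)$ on the Azumaya side; once the two class maps are matched, the isomorphism is a formal consequence of passing to fibers over $[\XX]$. A secondary point to watch is the interplay between the fixed-determinant condition $\det(E)\cong\sO$, which is what reduces the structure group from $\Gm$ to $\mu_r$ and replaces $\XX(A)$ by $\XX_{\triv}(A)$, and the $\mu_r$-rigidification, so as to ensure the determinant is genuinely rigidified away and does not reappear as extra automorphisms.
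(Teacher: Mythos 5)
Your proposal follows essentially the same route as the paper: first the absolute isomorphism $\rHP^{\sO}_{X/S}(r)\fatslash\mu_r\stackrel{\sim}{\longrightarrow}\rHA_X(r)$ from the pure variant of Proposition \ref{prop_Higgs_Azumaya_morphisms}, then pushforward along $f$ to identify $\HGA_{X/S}(r)$ with $f_*(\rHA_X(r))$, then restriction to the $\XX$-twisted part. Your extra care about matching the $\mu_r$-gerbe class maps via $\XX_{\triv}(A)$ is precisely the verification the paper compresses into ``it is not hard to check,'' so the argument is correct and aligned with the paper's.
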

\begin{proof}
Recall from Definition \ref{defn_Abstract_Higgs_PGL}, 
$$\rH^{\XX}_{r}=f^{\XX}_*\left(\rHP^{\sO}_{X/S}(r)\fatslash \mu_r\right).$$
The stack $\rHA_X(r)$ parametrizes the generalized Higgs-Azumaya algebras $(A,\Phi)$ on $X$ of degree 
$r$, such that locally they are isomorphic to $R\sE nd(E)$ for an object 
$E\in \rHP_{X/S}(r)$.  So when we fix the determinant,  the natural map:
$$\rHP_{X/S}^{\sO}(r)\to \rHA_X(r)$$
induces an isomorphism 
\begin{equation}\label{eqn_isomorphism_HP_HA}
\rHP_{X/S}^{\sO}(r)\fatslash \mu_r\stackrel{\sim}{\longrightarrow} \rHA_X(r).
\end{equation}
For the morphism $f: X\to S$, it is not hard to check that:
$$\HGA_{X/S}(r)\stackrel{\sim}{\longrightarrow} f_*(\rHA_X(r))$$
and when  fixing the cohomology class $[\XX]\in H^2(X,\mu_r)$ we have: 
$$\HGA_{\XX/S}(r)\stackrel{\sim}{\longrightarrow} f^{\XX}_*(\rHA_X(r))=\rH_r^{\XX}.$$
\end{proof}

\section{Stability, virtual fundamental classes and application to Vafa-Witten invariants}\label{sec_POT_virtual_class_VW_Invariants}

In this section we fix a smooth projective surface $f: X\to S=\spec(\kappa)$. We define stability for Higgs Azumaya algebras. As pointed out by Lieblich, it is not very convenient to define Gieseker stability for Higgs-Azumaya algebras and we fix to slope stability. 

\subsection{Stability conditions}\label{subsec_stability_condition}

Recall that for a torsion-free sheaf $E$ on $X$, the slope of $E$ is defined as:
$\mu(E)=\frac{\deg(E)}{\rk(E)}$. The Hilbert polynomial of $E$ is given by $H(E,m)=\chi(X, E\otimes \sO_X(m))$. For a coherent sheaf $E$ on the $\mu_r$-gerbe $\XX\to X$, we us the Vistoli Chow group 
$A_*(\XX)\cong A_*(X)$ with $\qq$-coefficient. There is a degree map
$$d: A_0(\XX)\to \qq$$
such that $d([pt])=\frac{1}{r}$ if $[\pt]$ is a $0$-cycle supported on a single point of $X$.  We define a normalized degree function 
$$\deg: A_0(\XX)\to \qq$$
by $\deg([pt])=r d([\pt])=1$.
The slope of $E$ on $\XX$ is then again defined as $\mu(E) =\frac{\deg(E)}{\rk(E)}$.

\begin{defn}(Slope stability)
A torsion-free sheaf $E$ on $\XX$ is stable if for any subsheaf $F\subset E$, $\mu(F)<\mu(E)$.
\end{defn}

Now let $(A,\Phi)$ be a Higgs-Azumaya algebra on $X$. The Higgs field is 
$\Phi: A\to A\otimes K_X$.  For the Azumaya algebra $A$ on $X$, we define 
$$\mu(A)=\frac{\deg(A)}{\rk(A)}$$
where $\deg(A)$ is defined as the degree of the adjoint sheaf $E$. 

\begin{defn}\label{defn_stability_Azumaya}(\cite[Definition 6.1.2]{Lieblich_ANT})
An Higgs-Azumaya algebra $(A,\Phi)$ on $X$ is called stable if for all  non-zero $\Phi$-invariant 
right ideals $I\subset A$ of rank strictly smaller than $\rk(A)$ we have $\mu(I)<0$.
\end{defn}

\begin{lem}
For a $\mu_r$-gerbe $p: \XX\to X$, a locally free $\XX$-twisted sheaf $E$ is stable if and only if the Azumaya algebra 
$p_*\sE nd(E)$ is stable. 
\end{lem}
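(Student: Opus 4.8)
The plan is to establish the equivalence between stability of a locally free $\XX$-twisted sheaf $E$ and stability of its associated Azumaya algebra $p_*\sE nd(E)$ by translating the two notions of stability through the correspondence between subsheaves of $E$ and $\Phi$-invariant right ideals of $A := p_*\sE nd(E)$. The key observation is that slope stability of $E$ is a numerical condition on all subsheaves $F \subset E$ with $\mu(F) < \mu(E)$, whereas stability of the Azumaya algebra is a condition on right ideals $I \subset A$ with $\mu(I) < 0$; the task is to match these two families of test objects and verify the slope inequalities correspond.

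First I would recall the standard dictionary for Azumaya (endomorphism) algebras: for a locally free sheaf $E$ on a scheme (or gerbe), the right ideals of $\sE nd(E)$ of a given rank are in bijection with the subsheaves (subbundles) of $E$, via $F \mapsto \cHom(E, F) = \{\, \varphi : E \to E \mid \im(\varphi) \subseteq F \,\}$. A right ideal $I$ of rank $sr$ (where $E$ has rank $r$) corresponds to a subsheaf $F$ of rank $s$. Pushing forward under $p : \XX \to X$ and using that $\XX$ is a $\mu_r$-gerbe, this correspondence descends to a bijection between $\XX$-twisted subsheaves $F \subset E$ and right ideals $I \subset A = p_*\sE nd(E)$. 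The next step is the slope bookkeeping: using the normalized degree function $\deg : A_0(\XX) \to \qq$ introduced above and the identity $A_*(\XX) \cong A_*(X)$, one computes that $\mu(I) < 0$ is equivalent to the condition $\mu(F) < \mu(E)$. Concretely, $\deg(\cHom(E,F))$ relates to $r\deg(F) - s\deg(E)$ up to the appropriate normalization, so that the sign condition $\mu(I) < 0$ becomes exactly $\mu(F)/s < \mu(E)/r$, i.e.\ $\mu(F) < \mu(E)$ in the normalized slopes.

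The final step is to carry the Higgs field along. Because $E$ is a \emph{twisted Higgs} sheaf with $\phi : E \to E \otimes K_{\XX}$ and $\Phi$ is the induced Higgs field on $A$, I would check that a subsheaf $F \subset E$ is $\phi$-invariant (i.e.\ $\phi(F) \subseteq F \otimes K_{\XX}$) precisely when the corresponding right ideal $I \subset A$ is $\Phi$-invariant. This follows from the functoriality of the endomorphism construction: the diagram in Definition \ref{defn_generalized_Higgs_Azumaya} identifying $\Phi$ with the endomorphism-algebra Higgs field means $\Phi$-invariance of $I = \cHom(E,F)$ is equivalent to $\phi$ respecting the filtration by $F$. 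Thus the $\Phi$-invariant test ideals correspond bijectively to the $\phi$-invariant test subsheaves, and the slope inequalities match, giving the equivalence of stability in both directions.

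The main obstacle I expect is the careful verification of the rank and degree normalization on the gerbe $\XX$, since slopes are computed with the rational degree function $d$ and its normalization $\deg$, and one must ensure that the push-forward $p_*$ and the endomorphism construction interact correctly with these normalizations so that the numerical inequality $\mu(I) < 0$ translates \emph{exactly} (not merely up to a positive constant) into $\mu(F) < \mu(E)$. A secondary technical point is confirming that the bijection between subsheaves and right ideals is compatible with twisting, i.e.\ that $p_*\sE nd(E)$ is genuinely an (untwisted) Azumaya algebra on $X$ even though $E$ itself is only $\XX$-twisted; this is where the hypothesis that $E$ is locally free and the structure of the $\mu_r$-gerbe are essential.
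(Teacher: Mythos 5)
Your proposal is correct and follows essentially the same route as the paper, whose entire proof is the one-line observation that every $\Phi$-invariant right ideal of $A$ has the form $\cHom(E,F)$ for a $\phi$-invariant subsheaf $F\subset E$. Your additional slope bookkeeping, $\mu(\cHom(E,F))=\mu(F)-\mu(E)$ so that $\mu(I)<0$ matches $\mu(F)<\mu(E)$, correctly fills in the numerical verification the paper leaves implicit.
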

\begin{proof}
This is from that any $\Phi$-invariant right ideal of $A$ has the form 
$\Hom(E,F)$ for a $\phi$-invariant subsheaf $F\subset E$. 
\end{proof}

We let $$\HGA^{s}_{\XX/S}(r)\subset \HGA_{\XX/S}(r)$$
denote the open substack of $\HGA_{\XX/S}(r)$ parametrizing stable generalized Higgs-Azumaya algebras by the isomorphism 
$\HGA_{\XX/S}(r)\cong \rH_r^{\XX}$ in Proposition \ref{prop_HGA_Hr}.  Therefore 
$\HGA^{s}_{\XX/S}(r)\cong \left(\rH_r^{\XX}\right)^s$.

\subsection{Obstruction theory for twisted sheaves}\label{subsec_POT_twisted_sheaves}

Let $f: X\to S$ be a smooth morphism, and 
let us fix a $\mu_r$-gerbe $\XX\to X$ such that $\per(\XX):=|[\XX]|=r$ in $H^2(X,\Gm)_{\tor}$, i.e., it is an $\mu_r$-optimal gerbe.  Then from de Jong's result \cite{de_Jong}, the index $\ind(\XX)=\per(\XX)$, where 
$\ind(\XX)$ is the minimal rank $r>0$ such that there exists a $\XX$-twisted locally free sheaf of rank $r$.  Moreover, any such a $\XX$-twisted sheaf $E$ is stable automatically. 

Let  $\sM^{s,\tw}:=\sM^{s,\tw}_{\XX/\kappa}(r, L, c_2)$ be the moduli stack  of stable $\XX$-twisted torsion free sheaves with rank $r$, fixed determinant 
$L\in\Pic(\XX)$ and second Chern class $c_2$.  Let 
$$\E\to \XX\times \sM^{s,\tw}_{\XX/\kappa}(r, L, c_2)$$
be the universal twisted sheaf on $\XX\times \sM^{s,\tw}$. Let 
$$p_{\XX}: \XX\times \sM^{s,\tw}_{\XX/\kappa}(r, L, c_2)\to \XX$$ and 
$$p_{\sM}: \XX\times \sM^{s,\tw}_{\XX/\kappa}(r, L, c_2)\to \sM^{s,\tw}_{\XX/\kappa}(r, L, c_2)$$
be the projections.  Then from  \cite[\S 6.5.1]{Lieblich_Compo}, 
we have 
$$\Phi: R p_{\sM*}\left(R\cHom(\E, \ll p_{\XX}^*\omega_{\XX}\otimes^{\\}\E\right))\longrightarrow \ll_{\sM^{s,\tw}_{\XX/\kappa}}[-1]$$
and we restrict it to the traceless part and get a morphism:
\begin{equation}\label{eqn_POT_sM}
\Phi: R p_{\sM*}\left(R\cHom(\E, \ll p_{\XX}^*\omega_{\XX}\otimes^{\\}\E\right))_0\longrightarrow \ll_{\sM^{s,\tw}_{\XX/\kappa}}[-1]. 
\end{equation}
Then the shift of $\Phi$ by $1$, $\Phi[1]$ is a perfect obstruction theory for the moduli stack $\sM^{s,\tw}_{\XX/\kappa}$ in the sense of \cite{BF}, \cite{LT}.

\subsection{Deformation theory and virtual classes for $\HGA^{s}_{\XX/S}$}\label{subsec_deformation_virtual_class}

Recall that $\N^{\tw}:=\N^{\tw,s}_{\XX/S}(r,\sO,c_2)$ is the moduli stack of $\XX$-twisted stable Higgs sheaves of rank $r$ with fixed determinant $\sO$ and second Chern class $c_2$.  If we let 
$\N_{\vb}^{\tw,s}:=\N^{\tw,\vb, s}_{\XX/S}(r,\sO,c_2)$ be the substack of $\N^{\tw,s}_{\XX/S}(r,\sO,c_2)$ parametrizing locally free $\XX$-twisted  rank $r$ sheaves, then $\N_{\vb}^{\tw,s}$ is schematically dense. 

Recall from \S \ref{subsec_notations}, the stack $\rHP_{X/S}^{\sO}(r)$ is the stack of  Higgs pure sheaves on $X$ of rank $r$
and $\det(E)=\sO$.  Let  $\left(\rHP_{X/S}^{\sO}(r)\right)^{\XX}$ be the substack of the $\XX$-twisted pure sheaves of 
$\rHP_{X/S}^{\sO}(r)$. Then 
$$f_*\left(\left(\rHP_{X/S}^{\sO}(r)\right)^{\XX}\right)=f_*^{\XX}\left(\rHP_{X/S}^{\sO}(r)\right).$$
Also if we let $\left(\rHP_{X/S}^{\sO}(r)\right)^{\XX,s}$ be the substack of $\left(\rHP_{X/S}^{\sO}(r)\right)^{\XX}$ consisting of stable $\XX$-twisted pure Higgs sheaves of rank $r$, $\det(E)=\sO$, then 
$$\left(\rHP_{X/S}^{\sO}(r)\right)^{\XX,s}\cong \N^{\tw, s}_{\XX/S}(r,\sO).$$
From (\ref{eqn_isomorphism_HP_HA}),  
$\rHP_{X/S}^{\sO}(r)\fatslash \mu_r\stackrel{\sim}{\longrightarrow} \rHA_X(r)$. After fixing the cohomology class of the gerbe 
$[\XX]$, we have:
$$\left(\rHP_{X/S}^{\sO}(r)\right)^{\XX,s}\fatslash \mu_r\stackrel{\sim}{\longrightarrow} \left(\rHA_X(r)\right)^{\XX,s},$$
where 
$\left(\rHA_X(r)\right)^{\XX,s}\subset \rHA_X(r)$ is the substack of stable generalized Higgs-Azumaya algebras 
$(A,\Phi)$ on $X$ with cohomology class $[\XX]\in H^2(X,\mu_r)$.  Hence there is a morphism:
\begin{equation}\label{eqn_morphism_Ntw_HA}
\Psi_1: \N^{\tw,s}_{\XX/S}(r,\sO)\to \left(\rHA_X(r)\right)^{\XX,s}
\end{equation}
which is a $\mu_r$-gerbe.  Also from the morphism $f: X\to \spec(\kappa)=S$, we have a morphism 
\begin{equation}\label{eqn_morphism_HA_HGA}
\Psi_2:  \left(\rHA_X(r)\right)^{\XX,s}\to f_*^{\XX,s}(\rHA_X(r))=(\rH_r^{\XX})^{s}=\HGA^{s}_{\XX/S}(r)
\end{equation}
which is a $R^1f_*(\mu_r)$-cover.  Thus we have:
\begin{prop}\label{prop_covering_morphism}
We get a covering morphism 
\begin{equation}\label{eqn_morphism_Ntw_HGA}
\Psi: \N^{\tw,s}_{\XX/S}(r,\sO)\to \HGA^{s}_{\XX/S}(r)
\end{equation}
which factors through $\left(\rHA_X(r)\right)^{\XX,s}$.
\end{prop}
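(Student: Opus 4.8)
The plan is to set $\Psi:=\Psi_2\circ\Psi_1$, the composition of the two morphisms already produced in \eqref{eqn_morphism_Ntw_HA} and \eqref{eqn_morphism_HA_HGA}. The factorization through $\left(\rHA_X(r)\right)^{\XX,s}$ is then immediate, since that stack is by construction the target of $\Psi_1$ and the source of $\Psi_2$; what remains is to check that the composite is a covering morphism in the intended sense, which reduces to recalling the covering nature of each factor.

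First I would justify that $\Psi_1$ is a $\mu_r$-gerbe. Under the identifications $\N^{\tw,s}_{\XX/S}(r,\sO)\cong\left(\rHP_{X/S}^{\sO}(r)\right)^{\XX,s}$ and $\left(\rHP_{X/S}^{\sO}(r)\right)^{\XX,s}\fatslash\mu_r\cong\left(\rHA_X(r)\right)^{\XX,s}$ coming from \eqref{eqn_isomorphism_HP_HA}, the map $\Psi_1$ is exactly the rigidification morphism $\ZZ\to\ZZ\fatslash\mu_r$ restricted to the stable locus. By the rigidification formalism of Abramovich-Corti-Vistoli \cite{ACV}, such a morphism is a $\mu_r$-gerbe; the only input is that the scalars $\mu_r\subset\Gm$ acting on a twisted Higgs sheaf sit centrally in the inertia, which is immediate from the twisted action of Definition \ref{defn_twisted_Higgs_pair_XX}.

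Second I would justify that $\Psi_2$ is an $R^1f_*(\mu_r)$-cover. This is an instance of Proposition \ref{prop_Lieblich_section3} (Lieblich's Corollary 3.4.3.3): taking $A=\mu_r$, the natural map $\gamma$ is representable by finite \'etale covers, with covering group governed by $R^1f_*(\mu_r)$. The one point to verify beyond Lieblich's statement is that $\Psi_2$ carries stable loci to stable loci; this follows from the stability lemma of \S\ref{subsec_stability_condition}, which matches stability of the Higgs-Azumaya algebra $(A,\Phi)$ with stability of the underlying twisted Higgs sheaf, together with the compatibility of the Higgs fields recorded in Proposition \ref{prop_Higgs_Azumaya_morphisms}.

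Finally, assembling the factors, $\Psi=\Psi_2\circ\Psi_1$ is a $\mu_r$-gerbe followed by a finite \'etale $R^1f_*(\mu_r)$-cover, which is precisely the covering morphism asserted. I do not expect a genuine obstacle: the substantive content already resides in Proposition \ref{prop_Lieblich_section3} and the rigidification theory, and the only real care needed is bookkeeping---ensuring that the open stable substacks correspond across the isomorphisms of \eqref{eqn_isomorphism_HP_HA}, and that passage to the $\XX$-twisted part commutes with rigidification and push-forward, both of which are supplied by Proposition \ref{prop_Lieblich_section3}.
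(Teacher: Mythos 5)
Your proposal is correct and follows essentially the same route as the paper: the paper derives the proposition immediately by composing the $\mu_r$-gerbe $\Psi_1$ of \eqref{eqn_morphism_Ntw_HA} (coming from the rigidification $\rHP_{X/S}^{\sO}(r)\fatslash\mu_r\cong\rHA_X(r)$ restricted to the stable $\XX$-twisted locus) with the $R^1f_*(\mu_r)$-cover $\Psi_2$ of \eqref{eqn_morphism_HA_HGA} supplied by Proposition \ref{prop_Lieblich_section3}. Your additional remarks on matching the stable loci via the stability lemma of \S\ref{subsec_stability_condition} only make explicit bookkeeping that the paper leaves implicit.
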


\begin{rmk}
The stack $\left(\rHA_X(r)\right)^{\XX,s}$ is actually isomorphic to the moduli stack of twisted stable Higgs sheaves in sense of Yoshioka \cite{Yoshioka} which is studied in \cite[\S 3.3, \S 3.4]{Jiang_2019}.
\end{rmk}

\subsubsection{Perfect obstruction theory for twisted Higgs sheaves}

For the stack  $\N^{\tw}:=\N^{\tw,s}_{\XX/S}(r,\sO,c_2)$ of $\XX$-twisted stable Higgs sheaves on $\XX$, we recall the perfect obstruction theory in \cite[\S 4.2]{Jiang_2019}. 

We work on a family $\XX\to X\to  S$ over a scheme $S$. 
Recall that $\YY=\Tot(K_{\XX})$, and the category of $\XX$-twisted  Higgs sheaves $(E,\phi)$ on $\XX$ is equivalent to the category of torsion sheaves on $\YY$ supported on $\XX\subset \YY$. 
Let us pick up a twisted  universal sheaf $\rE$ over $\N^{\tw}\times_{S}\YY$.  
We use the same $\pi$ to represent the projection 
$$\pi: \YY\to \XX; \quad  \pi: \N^{\tw}\times_{S}\YY\to \N^{\tw}\times_{S}\XX.$$
We then let 
$$\E:=\pi_*\rE \text{~on~} \N^{\tw}\times_{S}\XX$$
is flat over $\N^{\tw}$.  $\E$ is also coherent because it can be seen locally on $\N^{\tw}$, and is also a $\XX$-twisted sheaf. It defines a classifying map:
$$\Pi: \N^{\tw}\to \sM^{\tw}$$
by
$$\sE\mapsto \pi_*\sE; \quad  (E,\phi)\mapsto E,$$
where $\sM^{\tw}$ is the moduli stack of $\XX$-twisted coherent sheaves on the fibre of $\XX\to S$. 
We use the same $\E$ over $\sM^{\tw}\times \XX$ and $\E=\Pi^*\E$ on $\N^{\tw}\times \XX$. 
Let 
$$p_{\YY}:  \N^{\tw}\times_{S}\YY\to \N^{\tw}; \quad   p_{\XX}:  \N^{\tw}\times_{S}\XX\to \N^{\tw}$$
be the projections.  From \cite[(4.2.4)]{Jiang_2019}, we have:
\begin{equation}\label{eqn_deformation2}
R\cHom_{p_{\YY}}(\rE, \rE)\stackrel{\pi_*}{\longrightarrow}R\cHom_{p_{\XX}}(\E, \E)\stackrel{[\cdot, \phi]}{\longrightarrow}R\cHom_{p_{\XX}}(\E, \E\otimes K_{\XX}).
\end{equation}
Taking the relative Serre dual of the above exact triangle we get 
\begin{equation}\label{eqn_deformation3}
R\cHom_{p_{\XX}}(\E, \E)[2]\to R\cHom_{p_{\XX}}(\E, \E\otimes K_{\XX/B})[2]\to R\cHom_{p_{\YY}}(\rE, \rE)[3].
\end{equation}
From \cite[Proposition 4.1]{Jiang_2019}, (\ref{eqn_deformation3})
is the same as (\ref{eqn_deformation2}), just shifted.

We borrow the following commutative diagram in \cite[Corollary 2.22]{TT1}, in which 
 the exact triangle  (\ref{eqn_deformation2}) fits:
\[
\xymatrix{
R\cHom_{p_{\XX}}(\E, \E\otimes K_{\XX/S})_{0}[-1]\ar[r]\ar@{<->}[d] &R\cHom_{p_{\YY}}(\rE, \rE)_{\perp}
\ar[r]\ar@{<->}[d] & R\cHom_{p_{\XX}}(\E, \E)_{0}\ar@{<->}[d]\\
R\cHom_{p_{\XX}}(\E, \E\otimes K_{\XX/S})[-1]\ar[r]\ar@{<->}[d]_{\id}^{\tr} &R\cHom_{p_{\YY}}(\rE, \rE)
\ar[r]\ar@{<->}[d] & R\cHom_{p_{\XX}}(\E, \E)\ar@{<->}[d]_{\id}^{\tr}\\
Rp_{\XX *}K_{\XX/S}[-1]\ar@{<->}[r]& Rp_{\XX *}K_{\XX/S}[-1]\oplus Rp_{\XX *}\sO_{\XX}\ar@{<->}[r]& Rp_{\XX *}\sO_{\XX}
}
\]
where $(-)_0$ denotes the trace-free Homs.  The $R\cHom_{p_{\YY}}(\rE, \rE)_{\perp}$ is the co-cone of the middle column. 
The truncation $\tau^{[-1,0]}R\cHom_{p_{\YY}}(\rE, \rE)$ defines a symmetric perfect obstruction theory on the moduli space $\N^{\tw}$. 

Now consider the morphism (\ref{eqn_morphism_Ntw_HGA}) in Proposition \ref{prop_covering_morphism}:
$$\Psi: \N^{\tw}\to \HGA^{s}_{\XX/S}(r,c_2),
$$
where $\HGA^{s}_{\XX/S}(r,c_2)$ represents the stack of generalized Higgs-Azumaya algebras on $X$ with cohomology class 
$[\XX]$, and of rank $r^2$ and second Chern class $c_2$.  
Let $\HA:=(\mathbf{A}, \Phi)$ be the universal generalized Higgs-Azumaya algebra on 
$X\times \HGA^s_{\XX/S}(r)$ whose fiber over the moduli space have cohomology class 
$[\XX]$. Let 
$$\mathbf{p}: \XX(\HA)\to X\times \HGA^s_{\XX/S}(r)$$
be the gerbe of trivialized trivializations of $\HA$.  Then we have 
$$\mathbf{A}\cong Rp_* R\sE nd(\widetilde{\E})$$
for an $\XX(\HA)$-twisted sheaf $\widetilde{\E}$, and a universal Higgs filed 
$\Phi$. Thus $(\widetilde{\E}, \Phi)$ defines a universal $\XX(\HA)$-twisted sheaf $\widetilde{\rE}$
on $\YY\times \N^{\tw}$.  Moreover, the covering morphism $\Psi$ gives rise to an isomorphism: 
$$\widetilde{p}: 
\XX(\HA)\times_{X\times \HGA^s_{\XX/S}(r)}X\times \N^{\tw}\stackrel{\sim}{\longrightarrow}
\XX\times \N^{\tw}$$
and $\widetilde{p}^*\E$ is a universal sheaf $\widetilde{\E}$. 
Also we have an isomorphism: 
$$\widetilde{p}: 
\XX(\HA)\times_{Y\times \HGA^s_{\XX/S}(r)}Y\times \N^{\tw}\stackrel{\sim}{\longrightarrow}
\YY\times \N^{\tw}$$
and $\widetilde{p}^*\rE$ is a universal sheaf $\widetilde{\rE}$.  Thus we get 
\begin{equation}\label{eqn_RHom}
R\cHom(\rE, \rE)\cong \mathbf{L}\widetilde{p}^*\HA.
\end{equation}
Now let $\HA_0\subset \HA$ be the trace-less part and we have
$$
R\cHom(\rE, \rE)_0^{\perp}\cong \mathbf{L}\widetilde{p}^*\HA_0.
$$
So from \cite[Proposition 3.4.3.7]{Lieblich_ANT}, or the virtual pullback we have:

\begin{thm}\label{thm_POT_HGA}
Let $L^{\bullet}_{\HGA}$ be the cotangent complex for the stack $\HGA^s_{\XX/S}(r)$. Then 
$$\HA_0\to L^{\bullet}_{\HGA}$$
defines a symmetric obstruction theory on $\HGA^s_{\XX/S}(r)$ in sense of Behrend \cite{Behrend}.
\end{thm}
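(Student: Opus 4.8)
The plan is to transport the symmetric perfect obstruction theory already available on $\N^{\tw}$ down to $\HGA^s_{\XX/S}(r)$ along the covering morphism $\Psi$ of Proposition \ref{prop_covering_morphism}, using the identification (\ref{eqn_RHom}). Recall that the truncation $\tau^{[-1,0]}R\cHom_{p_{\YY}}(\rE,\rE)_{\perp}$ furnishes a symmetric obstruction theory $\theta_{\N}\colon R\cHom_{p_{\YY}}(\rE,\rE)_{\perp}\to L^{\bullet}_{\N^{\tw}}$ on $\N^{\tw}$, whose self-duality $R\cHom_{p_{\YY}}(\rE,\rE)_{\perp}\cong R\cHom_{p_{\YY}}(\rE,\rE)_{\perp}^{\vee}[1]$ is the relative Serre-duality pairing underlying (\ref{eqn_deformation3}). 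The traceless form of (\ref{eqn_RHom}) reads $R\cHom(\rE,\rE)_0^{\perp}\cong \mathbf{L}\widetilde{p}^{*}\HA_0$, so the obstruction complex upstairs is exactly the $\Psi$-pullback of the candidate complex $\HA_0$; the task is then to descend both the map $\theta_{\N}$ and the pairing.

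First I would record that $\Psi$ has vanishing relative cotangent complex. By Proposition \ref{prop_covering_morphism}, $\Psi$ factors as the $\mu_r$-gerbe $\Psi_1\colon \N^{\tw}\to(\rHA_X(r))^{\XX,s}$ followed by the $R^1f_*(\mu_r)$-cover $\Psi_2$; the former has trivial cotangent complex by Lemma 3.4.3.1 of \cite{Lieblich_ANT} (the banding group $\mu_r$ contributes nothing), and the latter is representable by finite \'etale covers by Proposition \ref{prop_Lieblich_section3}. Hence $\mathbf{L}\Psi^{*}L^{\bullet}_{\HGA}\cong L^{\bullet}_{\N^{\tw}}$ canonically, and $\Psi$ is faithfully flat.

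Next I would obtain the map $\HA_0\to L^{\bullet}_{\HGA}$ and verify the obstruction-theory axioms. The existence of this map, as the descent of $\theta_{\N}$ along the gerbe-plus-\'etale cover, is precisely the content of \cite[Proposition 3.4.3.7]{Lieblich_ANT} applied to the symmetric obstruction theory of \cite{TT1}, \cite{Jiang_2019} recalled above: both $\HA_0$ and the comparison with the cotangent complex are built from the universal datum and the canonical Serre pairing, which are pulled back from $\HGA^s_{\XX/S}(r)$, so they are $\Psi$-equivariant and descend. That $\HA_0\to L^{\bullet}_{\HGA}$ has perfect amplitude in $[-1,0]$, is an isomorphism on $h^{0}$ and an epimorphism on $h^{-1}$ may all be checked after the faithfully flat base change $\Psi$, where by (\ref{eqn_RHom}) and the previous paragraph they reduce to the corresponding statements for $\theta_{\N}$ on $\N^{\tw}$.

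Finally, the symmetric enhancement. The self-dual pairing on $R\cHom_{p_{\YY}}(\rE,\rE)_{\perp}$ comes from relative Serre duality on $\YY$, which is canonical and commutes with the identifications $\widetilde{p}$; under (\ref{eqn_RHom}) it therefore corresponds to a self-duality $\HA_0\cong\HA_0^{\vee}[1]$ on the cover that is $\Psi$-equivariant, hence descends and yields a \emph{symmetric} obstruction theory downstairs in the sense of \cite{Behrend}. I expect the main obstacle to be exactly this last step: one must confirm that the Serre-duality isomorphism is compatible with the gerbe/cover identifications $\widetilde{p}$, so that the descended pairing is genuinely nondegenerate and symmetric rather than merely giving an ordinary obstruction theory; checking $\Psi$-equivariance of the structure map $\HA_0\to L^{\bullet}_{\HGA}$ itself, and not only of the complex $\HA_0$, is the other delicate point, since Lieblich's Proposition 3.4.3.7 is phrased for ordinary (non-symmetric) obstruction theories.
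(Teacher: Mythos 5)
Your proposal is correct and follows essentially the same route as the paper: the paper's proof consists precisely of the compatibility square identifying $R\cHom(\rE,\rE)_0^{\perp}$ with $\mathbf{L}\widetilde{p}^*\HA_0$ via (\ref{eqn_RHom}) and $L^{\bullet}_{\N^{\tw}}$ with $\Psi^*L^{\bullet}_{\HGA}$, followed by descent of the symmetric structure from $\N^{\tw}$, exactly as you describe. Your write-up merely supplies details the paper leaves implicit (triviality of the relative cotangent complex of $\Psi$, checking the obstruction-theory axioms after faithfully flat base change, and equivariance of the Serre pairing), and your closing caveats identify genuine points the paper's one-line argument glosses over.
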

\begin{proof}
This is from the compatible diagram:
\[
\xymatrix{
R\cHom(\rE, \rE)_0^{\perp}\ar[r]\ar[d]_{\cong}& L^{\bullet}_{\N^{\tw}}\ar[d]^{\cong}\\
\mathbf{L}\widetilde{p}^*\HA_0\ar[r]& \Psi^*L^{\bullet}_{\HGA}
}
\]
The symmetric property of the perfect obstruction theory  $\HA_0\to L^{\bullet}_{\HGA}$ is just from the symmetric obstruction theory of $R\cHom(\rE, \rE)_0^{\perp}$.
\end{proof}

\subsection{Application to Vafa-Witten invariants}

Recall that in \cite[\S 4.3]{Jiang_2019}, let 
$\N^{\perp,\tw}_{\sO}\subset \N^{\tw}$ be the the moduli stack of $\XX$-twisted rank $r$ stable Higgs sheaves  with 
$\det(E)=\sO$ and second Chern class $c_2$. Then  
$$R\cHom_{p_{\YY}}(\rE, \rE)_{\perp}[1]\Tt^{-1}\longrightarrow \ll_{\N^{\perp,\tw}_{\sO}}$$
defines a symmetric obstruction theory. 

Here is the definition of twisted Vafa-Witten invariants in \cite[Definition 4.4]{Jiang_2019}.
\begin{defn}\label{defn_SU_twisted_VW_invariants}
Let $\N^{\perp,\tw}_{\sO}$ be the moduli space of $\XX$-twisted stable Higgs sheaves.  Then the  twisted Vafa-Witten invariants are defined as:
$$\VW^{\tw}(\XX):=\int_{[(\N^{\perp,\tw}_{\sO})^{\Gm}]^{\vir}}\frac{1}{e(N^{\vir})}.$$
Here $\N^{\perp,\tw}_{\sO}$ admits a $\Gm$-action induced by the $\Gm$-action on $K_{\XX}$ by scaling the fibers and 
$[(\N^{\perp,\tw}_{\sO})^{\Gm}]^{\vir}$ is the virtual fundamental class on the fixed loci and $e(N^{\vir})$ is the Euler class of the virtual normal bundle. The invariants are defined by the virtual localization in \cite{GP}.  This corresponds to the gauge group 
$\SU(r)/\zz_r$. 
\end{defn}

The symmetric obstruction theory $\HA_0$ on $\HGA^s_{\XX/\kappa}(r)$ also defines a virtual fundamental class 
$$[\HGA^s_{\XX/\kappa}(r)]^{\vir}\in H_0(\HGA^s_{\XX/\kappa}(r)).$$
The stack $\HGA^s_{\XX/\kappa}(r)$ also admits a $\Gm$-action given by 
$\lambda(A,\Phi)=(A,\lambda \Phi)$ by scaling the Higgs fields.  We write 
$$\left(\HGA^s_{\XX/\kappa}(r)\right)^{\Gm}$$
as the $\Gm$-fixed loci.

We discuss the $\Gm$-fixed loci  $\left(\N^{\perp,\tw}_{\sO}\right)^{\Gm}$ and $\left(\HGA^s_{\XX/\kappa}(r)\right)^{\Gm}$. 

\subsubsection{Case I-Instanton Branch:}\label{subsubsec_first_type} 
This component corresponds to $\Gm$-fixed
 Higgs pairs $(E,\phi)$ such that $\phi=0$. Then twisted sheaf $E$ must be stable, and this component is the moduli space 
 $\sM_{\XX/\kappa}^{s,\tw}:=\sM^{s,\tw}_{\XX/\kappa}(r,\sO,c_2)$ of 
$\XX$-twisted stable sheaves on $\XX$ with rank $r$, fixed determinant $\sO$ and second Chern class $c_2$.  The exact triangle in (\ref{eqn_deformation2}) splits the obstruction theory
$$R\cHom_{p_{\YY}}(\rE, \rE)_{\perp}[1]\Tt^{-1}\cong R\cHom_{p_{\XX}}(\E, \E\otimes K_{\XX})_{0}[1]\oplus 
R\cHom_{p_{\XX}}(\E, \E)_{0}[2]\Tt^{-1}$$
where $\Tt^{-1}$ represents the moving part of the $\Gm$-action. Then the $\Gm$-action induces a perfect obstruction theory 
$$E_{\sM}^{\bullet}:=R\cHom_{p_{\XX}}(\E, \E\otimes K_{\XX})_{0}[1]\to \ll_{\sM_{\XX/\kappa}^{s,\tw}}.$$
The virtual normal bundle 
$$N^{\vir}=R\cHom_{p_{\XX}}(\E, \E\otimes K_{\XX})_{0}\Tt=E_{\sM}^{\bullet}\otimes \Tt[-1].$$
The twisted Vafa-Witten  invariant contributed from $\sM_{\XX/\kappa}^{s,\tw}$ is 
$$
\int_{[\sM_{\XX/\kappa}^{s,\tw}]^{\vir}}\frac{1}{e(N^{\vir})}
=\int_{[\sM_{\XX/\kappa}^{s,\tw}]^{\vir}}c_{\vd}(E_{\sM}^{\bullet})\in \zz.
$$
This is the signed virtual Euler number of Ciocan-Fontanine-Kapranov/Fantechi-G\"ottsche, see \cite{FG}. 

On the stack  $\left(\HGA^s_{\XX/\kappa}(r)\right)^{\Gm}$, the first component corresponds to the $\Gm$-fixed
 Higgs-Azumaya pairs $(A,\Phi)$ such that $\Phi=0$.  It is the stack $\GA^s_{\XX/\kappa}(r)$ of stable generalized Azumaya algebras $A$ on $X$ with cohomology class $[\XX]$.

\subsubsection{Case II-Monopole Branch:}\label{subsec_second_fixed_loci}
The second component $\sM^{(2)}$ corresponds to the case that in the $\Gm$-fixed $\SS$-twisted stable Higgs sheaf $(E,\phi)$, the Higgs field $\phi\neq 0$.  This component corresponds to nested Hilbert schemes, see \cite{Jiang_2019}.
For  $\left(\HGA^s_{\XX/\kappa}(r)\right)^{\Gm}$, the second component $\sM^{(2)}$ corresponds to  $\Gm$-fixed
 Higgs-Azumaya pairs $(A,\Phi)$ such that $\Phi\neq 0$.

\subsubsection{Relationship between virtual classes}

We present the pullback properties of virtual classes:

\begin{thm}\label{thm_pullback_virtual_class}
The morphism 
$$\Psi: \N^{\tw}\to \HGA^s_{\XX/\kappa}(r)$$
and the obstruction theories $E_{\N^{\tw}}^{\bullet}$ and $\HA_0$
are compatible with the $\Gm$-action, therefore gives 
$$\Psi^{!}\left([\HGA^s_{\XX/\kappa}(r)]^{\vir}\right)=[\N^{\perp, \tw}_{\sO}]^{\vir}.$$
We also have:
$$\Psi^{!}\left(\Big[\left(\HGA^s_{\XX/\kappa}(r)\right)^{\Gm}\Big]^{\vir}\right)=\Big[\left(\N^{\perp, \tw}_{\sO}\right)^{\Gm}\Big]^{\vir}$$
and 
$$\Psi^{!}\left(N_{\N^{\tw}}^{\vir}\right)=N_{\HGA}^{\vir},$$
where $N_{\N^{\tw}}^{\vir}$ and $N_{\HGA}^{\vir}$ are the virtual normal bundles for the obstruction theories $E_{\N^{\tw}}^{\bullet}$ and $\HA_0$.  Thus the $\PGL_r$-invariant
$$\VW_{\PGL}(X)=\int_{\Big[\left(\HGA^s_{\XX/\kappa}(r)\right)^{\Gm}\Big]^{\vir}}
\frac{1}{e(N_{\HGA}^{\vir})}$$
can be calculated by the twisted invariant $\VW^{\tw}(\XX)$ in Definition \ref{defn_SU_twisted_VW_invariants}.
\end{thm}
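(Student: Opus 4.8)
The plan is to derive all three identities from the $\Gm$-equivariant compatibility of obstruction theories established in Theorem~\ref{thm_POT_HGA}, together with the fact that the covering morphism $\Psi$ has trivial relative cotangent complex. Recall from Proposition~\ref{prop_covering_morphism} that $\Psi$ factors as $\Psi_2\circ\Psi_1$, where $\Psi_1\colon\N^{\tw}\to\left(\rHA_X(r)\right)^{\XX,s}$ is a $\mu_r$-gerbe and $\Psi_2\colon\left(\rHA_X(r)\right)^{\XX,s}\to\HGA^{s}_{\XX/\kappa}(r)$ is a finite \'etale $R^1f_*(\mu_r)$-cover. By Proposition~\ref{prop_Lieblich_section3} the relative cotangent complex $\ll_{\N^{\tw}/\HGA}$ is trivial, so the relative obstruction theory of $\Psi$ vanishes and the virtual pullback $\Psi^!$ is defined; this is precisely the situation governed by \cite[Proposition 3.4.3.7]{Lieblich_ANT} applied to Higgs data.

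First I would check that $\Psi$ is $\Gm$-equivariant. On $\N^{\tw}$ the torus acts by $\lambda\cdot(E,\phi)=(E,\lambda\phi)$ and on $\HGA^{s}_{\XX/\kappa}(r)$ by $\lambda\cdot(A,\Phi)=(A,\lambda\Phi)$; since $\Psi$ sends $(E,\phi)\mapsto(R\sE nd(E),\Phi)$ with $\Phi$ depending linearly on $\phi$, the morphism intertwines the two actions. Hence $\Psi$ restricts to $\Psi^{\Gm}\colon\left(\N^{\perp,\tw}_{\sO}\right)^{\Gm}\to\left(\HGA^{s}_{\XX/\kappa}(r)\right)^{\Gm}$, and I would verify that the compatibility isomorphism $R\cHom(\rE,\rE)_0^{\perp}\cong\mathbf{L}\widetilde{p}^*\HA_0$ from Theorem~\ref{thm_POT_HGA} is an isomorphism of $\Gm$-equivariant complexes. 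Decomposing into fixed and moving parts then identifies the fixed summand with the obstruction theory on the fixed locus and the moving summand with the virtual normal bundle, so that the virtual normal bundles correspond under $\Psi^{\Gm}$.

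Granting these compatibilities, the three displayed identities follow formally. Because $\ll_{\N^{\tw}/\HGA}$ is trivial and $\Psi^*\HA_0$ agrees with the obstruction theory on $\N^{\tw}$, functoriality of the virtual pullback (equivalently \cite[Proposition 3.4.3.7]{Lieblich_ANT}) yields the first identity. Running the same argument $\Gm$-equivariantly over the fixed loci---where the compatible obstruction theories restrict to compatible obstruction theories by the fixed/moving splitting above---gives the second identity, and matching the moving parts gives the third. Finally, since the virtual localization of \cite{GP} commutes with $\Psi^!$ and the Euler classes $e(N^{\vir})$ correspond under $\Psi^{\Gm}$, the projection formula converts the integral defining $\VW_{\PGL}(X)$ over $\left(\HGA^{s}_{\XX/\kappa}(r)\right)^{\Gm}$ into the integral defining $\VW^{\tw}(\XX)$ over $\left(\N^{\perp,\tw}_{\sO}\right)^{\Gm}$.

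The hard part will be the $\Gm$-equivariant weight bookkeeping in the isomorphism of Theorem~\ref{thm_POT_HGA}. One must track the torus weight on the Higgs direction through $R\cHom(\rE,\rE)_0^{\perp}\cong\mathbf{L}\widetilde{p}^*\HA_0$, matching the Higgs summand $R\cHom_{p_{\XX}}(\E,\E\otimes K_{\XX})_0$, which carries the nontrivial torus weight, with the corresponding moving summand of $\Psi^*\HA_0$, and confirming that this matching is an equality of equivariant perfect complexes rather than only a filtered quasi-isomorphism. Because $\Psi$ is \'etale modulo the gerbe $\Psi_1$, this should hold, but the interaction with the traceless reduction $(-)_0$ and with the symmetry of the obstruction theory is the delicate point to verify.
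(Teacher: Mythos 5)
Your proposal is correct and follows essentially the same route as the paper: the paper's proof likewise rests on the compatibility isomorphism $R\cHom(\rE,\rE)_0^{\perp}\cong\mathbf{L}\widetilde{p}^*\HA_0$ from Theorem~\ref{thm_POT_HGA}, the observation that the $\Gm$-action scales the Higgs fields on both sides so the symmetric obstruction theories are equivariantly compatible, and the virtual pullback machinery of \cite{Manolache}. Your additional details (the factorization through the $\mu_r$-gerbe and finite \'etale cover, the triviality of the relative cotangent complex, and the fixed/moving weight bookkeeping) are elaborations of the same argument rather than a different method.
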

\begin{proof}
This is the perfect obstruction theory of $\HA_0$ in Theorem \ref{thm_POT_HGA} and isomorphism (\ref{eqn_RHom}). 
Also since the $\Gm$-action is given by scaling the Higgs fields $\phi$ and $\Phi$, it is compatible with the symmetric obstruction theories.  Thus the results are from the virtual pullback property in \cite{Manolache}. 
\end{proof}

The following corollary is from the instanton branch:
\begin{cor}
When restricting to the instanton branch, the perfect obstruction theory 
$$E_{\sM}^{\bullet}:=R\cHom_{p_{\XX}}(\E, \E\otimes K_{\XX})_{0}[1]\to \ll_{\sM_{\XX/\kappa}^{s,\tw}}$$
is the same as the pullback of the perfect obstruction theory 
$\mathbf{A}_0\to L^{\bullet}_{\GA^s_{\XX/\kappa}}$ in \cite[\S 6.5.2]{Lieblich_ANT}. 
\end{cor}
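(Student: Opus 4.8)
The plan is to obtain the Corollary by specializing Theorem \ref{thm_pullback_virtual_class} to the $\Gm$-fixed instanton component, where the Higgs field vanishes and the entire construction degenerates to the non-Higgs (Azumaya) setting of Lieblich. The content is therefore to track what the isomorphism of Theorem \ref{thm_POT_HGA} becomes after imposing $\Phi=0$ and passing to $\Gm$-fixed parts.

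First I would pin down the two instanton loci together with the restricted morphism. By the analysis in \S\ref{subsubsec_first_type}, the instanton branch of $\left(\N^{\perp,\tw}_{\sO}\right)^{\Gm}$ is the moduli stack $\sM_{\XX/\kappa}^{s,\tw}$ of $\XX$-twisted stable sheaves with $\phi=0$, while the instanton branch of $\left(\HGA^s_{\XX/\kappa}(r)\right)^{\Gm}$ is Lieblich's stack $\GA^s_{\XX/\kappa}(r)$ of stable generalized Azumaya algebras with $\Phi=0$. On these loci the covering morphism $\Psi$ of Proposition \ref{prop_covering_morphism} restricts to the classifying morphism $E\mapsto R\sE nd(E)$, which is exactly the $\Phi=0$ case of the map $\gamma^{\sO}$ in Proposition \ref{prop_Higgs_Azumaya_morphisms}.

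Next I would restrict the central isomorphism $R\cHom(\rE,\rE)_0^{\perp}\cong \mathbf{L}\widetilde{p}^*\HA_0$ of Theorem \ref{thm_POT_HGA} to this branch. Setting $\Phi=0$ collapses the universal Higgs-Azumaya algebra $\HA=(\mathbf{A},\Phi)$ to the generalized Azumaya algebra $\mathbf{A}$, so $\HA_0$ becomes $\mathbf{A}_0$. On the $\N^{\tw}$ side, the splitting recorded in \S\ref{subsubsec_first_type} isolates the $\Gm$-fixed summand as $E_{\sM}^{\bullet}=R\cHom_{p_{\XX}}(\E,\E\otimes K_{\XX})_0[1]$. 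Taking fixed parts in the isomorphism then produces $E_{\sM}^{\bullet}\cong \mathbf{L}\widetilde{p}^*\mathbf{A}_0$, and the commutative square in the proof of Theorem \ref{thm_POT_HGA} identifies the obstruction map $E_{\sM}^{\bullet}\to \ll_{\sM_{\XX/\kappa}^{s,\tw}}$ with the pullback along $\Psi$ of $\mathbf{A}_0\to L^{\bullet}_{\GA^s_{\XX/\kappa}}$.

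The last, and genuinely delicate, step is to check that the complex produced this way really coincides with the one Lieblich constructs in \cite[\S 6.5.2]{Lieblich_ANT}. The main obstacle is a Serre-duality bookkeeping issue: Lieblich's deformation complex for generalized Azumaya algebras is assembled from $R\cHom_{p_{\XX}}(\E,\E)_0$, whereas the fixed summand $E_{\sM}^{\bullet}$ carries the extra twist by $K_{\XX}=\pi^*K_X$ inherited from the canonical direction of $\YY=\Tot(K_{\XX})$. On the smooth surface $X$ relative Serre duality gives $R\cHom_{p_{\XX}}(\E,\E\otimes K_{\XX})_0\cong \left(R\cHom_{p_{\XX}}(\E,\E)_0\right)^{\vee}[-2]$, which is precisely the symmetry underlying Behrend's symmetric obstruction theory and the signed virtual Euler number interpretation noted in \S\ref{subsubsec_first_type}. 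Once this duality is matched to Lieblich's presentation of $L^{\bullet}_{\GA^s_{\XX/\kappa}}$ and the trace/traceless decompositions are seen to correspond under $\Psi$, the identification of obstruction theories follows; the remaining verifications are formal consequences of the $\Gm$-equivariance already established in Theorem \ref{thm_pullback_virtual_class}.
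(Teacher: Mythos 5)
Your proposal is correct and follows the same route the paper intends: the paper offers no written argument beyond the remark that the corollary ``is from the instanton branch,'' i.e.\ it is exactly the specialization of Theorem \ref{thm_POT_HGA} and Theorem \ref{thm_pullback_virtual_class} to the $\Phi=0$ fixed component that you carry out (identify the two instanton loci, restrict $\Psi$ to $E\mapsto R\sE nd(E)$, and take $\Gm$-fixed parts of the isomorphism $R\cHom(\rE,\rE)_0^{\perp}\cong \mathbf{L}\widetilde{p}^*\HA_0$). The only comment is that the Serre-duality bookkeeping you flag as genuinely delicate is already absorbed into the paper's conventions --- the obstruction theory (\ref{eqn_POT_sM}) quoted from Lieblich carries the $\omega_{\XX}$-twist from the start --- so the identification of $E_{\sM}^{\bullet}$ with the pullback of $\mathbf{A}_0\to L^{\bullet}_{\GA^s_{\XX/\kappa}}$ is immediate rather than requiring an extra dualization step.
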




\subsection*{}

\end{document}